\documentclass[12pt,a4paper]{article} 
\textheight24cm
\textwidth16cm
\topmargin-5mm
\oddsidemargin0cm
\evensidemargin0cm

\usepackage{graphicx}
\usepackage{multirow}
\usepackage{amsmath,amssymb,amsfonts}
\usepackage{amsthm}
\usepackage{mathrsfs}
\usepackage[title]{appendix}
\usepackage{xcolor}
\usepackage{textcomp}
\usepackage{manyfoot}
\usepackage{booktabs}
\usepackage{algorithm}
\usepackage{algorithmicx}
\usepackage{algpseudocode}
\usepackage{listings}
\usepackage{hyperref}

\usepackage{proof}
\usepackage{color}
\usepackage{graphicx}
\usepackage{pdflscape}

\newtheorem{theorem}{Theorem}
\newtheorem{lemma}{Lemma}
\newtheorem{corollary}{Corollary}

\theoremstyle{definition}
\newtheorem{definition}{Definition}

\newtheorem{example}{Example}

\newtheorem{proposition}{Proposition}

\newtheorem{ourclaim}{Claim}
\newtheorem{oldtheorem}{Theorem}

\newcommand{\PC}{\mathrm{BC}}

\newcommand{\BS}{\mathop{\hspace*{-.1em}\backslash\hspace*{-.1em}}}

\newcommand{\set}[2]{\{ \, #1 \mid #2 \, \}}

\begin{document}
\sloppy
\emergencystretch 3em

\title{Categorial grammars with unique category assignment}
\author{Maxim Vishnikin \and Alexander Okhotin}
\maketitle

\begin{abstract}
A categorial grammar
assigns one of several
syntactic categories to each symbol of the alphabet,
and the category of a string is then deduced 
from the categories assigned to its symbols
using two simple reduction rules.
This paper investigates a special 
class of categorial grammars,
in which only one category is assigned to each symbol,
thus eliminating ambiguity on the lexical level
(in linguistic terms, 
a unique part of speech is assigned to each word).
While unrestricted categorial grammars 
are equivalent to the context-free grammars,
the proposed subclass initially appears weak,
as it cannot define even some regular languages.
It is proved in the paper that it is actually powerful enough
to define a homomorphic encoding of every context-free language,
in the sense that for every context-free language $L$ over an alphabet $\Sigma$
there is a language $L'$ over some alphabet $\Omega$
defined by categorial grammar with unique category assignment
and a homomorphism $h \colon \Sigma \to \Omega^+$,
such that a string $w$ is in $L$ if and only if $h(w)$ is in $L'$.
In particular, in Greibach's hardest 
context-free language theorem,
it is sufficient to use a hardest language
defined by a categorial grammar with 
unique category assignment.
\end{abstract}

\section{Introduction}

Categorial grammars are a logical formalism 
for syntax specification
that dates back to Ajdukiewicz~\cite{Ajdukiewicz1935}.
In their modern form, categorial grammars
were defined by Bar-Hillel~\cite{Barhillel_categorial} in 1953,
and later Bar-Hillel et al.~\cite{BarhillelGaifmanShamir}
proved them to be equivalent to the context-free grammars.
Many variants and extensions of categorial 
grammars have been studied in the literature,
and particularly well-known among them
are the Lambek calculus~\cite{Lambek},
which was proved equivalent to the 
context-free grammars by Pentus~\cite{Pentus},
and combinatory categorial grammars,
which are equivalent to tree-adjoining 
grammars and a few related formalisms~\cite{WeirJoshi}.

Categorial grammars may assign multiple 
categories to a single symbol,
and this property is essential for 
their equivalence to the context-free grammars:
if there is only one category for each symbol,
then categorial grammars already cannot describe, for instance,
the set $a^+$ of all non-empty unary strings.
In linguistic descriptions, multiple category assignment
is necessary to describe homographs:
e.g., the English word ``bear''
may be either a noun 
or a verb,
and a categorial grammar for English
must accordingly assign both categories 
to the symbol representing this word,
so that the parse of every sentence involving the word ``bear''
chooses one of the two categories for
each occurrence of this word.

This paper investigates the case
in which only one category may be assigned to every symbol:
\emph{categorial grammars with unique category assignment}.
These grammars interpret each symbol in a unique way
(in linguistic terms, they assign only 
one part of speech to every word,
so that no homographs are possible).
Intuitively, this prohibits any ambiguous 
specifications in a categorial grammar:
the only remaining potential ambiguity is in the order of applying
the fixed reduction rules common to all categorial grammars.
This is a natural restriction to investigate,
and the $a^+$ example given above confirms
that these grammars define a proper 
subclass of the context-free languages.
The question is, how limited this subclass is,
and what is its computational complexity?

Categorial grammars with unique category assignment
were earlier studied by Kanazawa~\cite{Kanazawa},
who developed a learning algorithm for this class of grammars.
Kanazawa's~\cite{Kanazawa} learning algorithm
was actually applicable to a larger subclass of categorial grammars,
which are restricted to assign at most $k$ categories to each symbol,
where $k \geqslant 1$ is a constant.
Besides the algorithm, Kanazawa~\cite{Kanazawa} proved
that if the number of categories assigned to a symbol is bounded by $k+1$,
then the resulting grammars are strictly more powerful than grammars with the bound $k$.

This paper begins with a short introduction to categorial grammars,
presented in Section~\ref{section_definitions}.
The investigation of the expressive power
of grammars with unique category assignment
conducted in this paper
starts with apparently the first example of a string of categories
that can be reduced to two different categories:
the string constructed early in Section~\ref{section_simulation_multiple_assignment}
reduces either to $p/p$, or to $p \BS p$,
depending on the order of applying the reduction rules to the string.
Admittedly, one could hardly derive any practical use of this example,
but it is still valuable for demonstrating the very possibility
for a grammar with unique category assignment
to interpret a single string in two different ways,
in spite of interpreting each symbol unambiguously.
The next, more complicated example is a string of categories
that reduces to $p/p$ or to $t/t$, for any categories $p$ and $t$,
and the rest of Section~\ref{section_simulation_multiple_assignment}
gradually develops the method to construct strings of categories
that are reducible to a given finite set of categories,
under a certain elaborate encoding.

This method of simulating choice
is used in the next Section~\ref{section_encoding_grammars}
to encode an arbitrary context-free grammar
into a categorial grammar with a unique category assignment.
Given a grammar in the Greibach normal form,
for each symbol $a$, all rules $A_1 \to a \alpha_1$, \ldots $A_k \to a \alpha_k$ in the grammar featuring this symbol in the first position
are encoded into a long string of categories
that simulates the choice between these $k$ rules,
and can be reduced to each of the $k$ categories representing these rules.
Finally, a homomorphism $h$ mapping each $a$
to the corresponding string of categories
has the property that, for every string $w$,
the homomorphic image $h(w)$ is reducible to a category $S/S$
if and only if the original grammar defines the string $w$.

This shows that categorial grammars with unique category assignment
can actually define a homomorphic encoding of every context-free language.
This, in particular, yields
an improvement to Greibach's~\cite{Greibach_hardest} hardest language theorem:
it is sufficient to take a hardest language
defined by a categorial grammar with unique category assignment.

\section{Definitions}\label{section_definitions}

Let $\Sigma$ be a finite \emph{alphabet} of the language being defined,
its elements are called \emph{symbols}.
The set of non-empty strings over $\Sigma$
is denoted by $\Sigma^+$.
Throughout this paper,
any subset of $\Sigma^+$ is a \emph{language},
that is, all languages are assumed to be without the empty string.
Categorial grammars operate with categories constructed from primitive categories using left- and right-division operations.

\begin{definition}
Let $\PC = \{ p, q, r, \dots \}$ be a finite set of \emph{primitive categories}.
The set \emph{categories} is defined as follows.
\begin{itemize}
\item
    Every primitive category is a category.
\item
    If $A$ and $B$ are categories,
    then so are $(B \BS A)$ and $(A / B)$.
\end{itemize}
\end{definition}
The outer pair of brackets in a category is typically omitted,
as in $r \BS (p/q)$.
When strings of categories are written,
individual categories shall be separated by semicolons,
as in $r \BS (p/q); p; q/r$.

\begin{definition}[Ajdukiewicz~\cite{Ajdukiewicz1935}, Bar-Hillel et al.~\cite{BarhillelGaifmanShamir}]
A categorial grammar is a quadruple $(\Sigma, \PC, \triangleright, S)$,
where $\Sigma$ is a finite alphabet,
$\PC$ is a finite set of primitive categories,
$\triangleright$ is a finite relation of category assignments
of the form $a \triangleright A$, where $a \in \Sigma$ and $A$ is a category,
and $S$ is a category designated as the target category.

Strings of categories can be \emph{reduced} using the following \emph{reduction rules},
which transform two adjacent categories into one:
\begin{equation*}
	\infer{A}{A / B \qquad B}
	\qquad
	\infer{A}{ B \qquad B \BS A}
    \qquad (\text{for all categories } A, B)
\end{equation*}
Accordingly, a string of categories $\alpha$ is said to be
\emph{reducible in one step} to a string of categories $\beta$,
if $\alpha=\eta; A/B; B; \theta$ or $\alpha=\eta; B; B \BS A; \theta$,
while $\beta=\eta; A; \theta$, for some categories $A$ and $B$,
and for some strings of categories $\eta$ and $\theta$.
A string of categories $\alpha$ is said to be \emph{reducible} to $\beta$
if it can be transformed to $\beta$
by zero or more applications of the above rule.
\end{definition}

\begin{definition}
The language defined by a categorial grammar $G=(\Sigma, \PC, \triangleright, S)$ is
$L(G)=\set{a_1 \ldots a_n}{\text{there exists a string of categories } A_1; \ldots; A_n
\text{ with } a_i \triangleright A_i \text{ for all } i,
\text{ which is reducible to } S}$.

\end{definition}

In general, categorial grammars are equal in power to context-free grammars.

\begin{oldtheorem}[Bar-Hillel et al.~\cite{BarhillelGaifmanShamir}]\label{Th:BarHillel}
A language $L \subseteq \Sigma^+$ is defined by a categorial grammar
if and only if
it is defined by a context-free grammar.
\end{oldtheorem}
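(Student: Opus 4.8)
The plan is to prove the two inclusions separately, each by a direct grammar transformation.

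For the direction from categorial grammars to context-free grammars, the key observation is a \emph{subformula property}: in any reduction of a string of categories $A_1; \ldots; A_n$, every category that ever appears is a subcategory of one of the $A_i$. This is immediate by induction on the number of reduction steps, since each rule $A/B; B \mapsto A$ and $B; B \BS A \mapsto A$ produces the category $A$, which is a subcategory of the consumed category $A/B$ (respectively $B \BS A$). Consequently, given a categorial grammar $G=(\Sigma, \PC, \triangleright, S)$, only finitely many categories --- the set $\mathrm{Sub}(G)$ of all subcategories of the finitely many categories occurring in $\triangleright$ together with $S$ --- can arise in any successful reduction. I would then build a context-free grammar whose nonterminals are exactly the categories in $\mathrm{Sub}(G)$, with start symbol $S$, with a terminal production $A \to a$ for every assignment $a \triangleright A$, and with productions $A \to (A/B)\, B$ and $A \to B\, (B \BS A)$ for every pair of categories in $\mathrm{Sub}(G)$ of the indicated shape. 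A leftmost-to-rightmost correspondence shows that $S$ derives $a_1 \ldots a_n$ in this grammar exactly when some assignment $A_1; \ldots; A_n$ reduces to $S$, so the two grammars define the same language.

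For the converse direction, I would first put the context-free grammar into Greibach normal form (legitimate since all languages here are $\varepsilon$-free), so that every production has the form $A \to a B_1 B_2 \cdots B_k$ with $k \geq 0$ and each $B_i$ a nonterminal. I would take the nonterminals as the primitive categories, keep the start symbol $S$ as the target category, and for each such production introduce the assignment $a \triangleright (\cdots((A/B_k)/B_{k-1}) \cdots /B_1)$ --- a category built from $A$ by successive right divisions, arranged so that consuming categories reducible to $B_1, B_2, \ldots, B_k$ in left-to-right order peels off the divisions one at a time and leaves $A$. For $k=0$ this is simply $a \triangleright A$. Because these categories use only right division, the left-division reduction rule never becomes applicable, and every reduction is forced to proceed by matching each assigned category against the block of categories to its right. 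An induction on the length of the string then establishes that $A \Rightarrow^{*} w$ holds in the Greibach normal form grammar if and only if $w$ admits an assignment reducing to the primitive category $A$; taking $A = S$ yields the equality of the two defined languages.

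The main obstacle is the reverse implication in each direction, where I must argue that reductions cannot do anything unintended. In the first direction this is handled by the subformula property together with the observation that the productions $A \to (A/B)\,B$ and $A \to B\,(B\BS A)$ are exactly the inverses of the two reduction rules, so derivations and reductions are literally the same trees read in opposite directions. In the second direction the delicate point is to show that, although reduction rules may in principle be applied in many orders, the purely right-divisional shape of the Greibach categories constrains every reduction of a successful string to respect the nesting dictated by the productions; I expect to prove this by analysing the leftmost applicable reduction and showing it must consume a complete production block, so that a canonical reduction order can always be imposed without loss of generality.
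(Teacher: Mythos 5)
This statement is Theorem~A, a classical result that the paper only \emph{quotes} from Bar-Hillel, Gaifman and Shamir --- it gives no proof of its own --- so the comparison can only be against the classical argument, and your proposal is essentially that argument. Your first direction (subformula property, then a context-free grammar with nonterminals $\mathrm{Sub}(G)$, productions $A \to (A/B)\,B$ and $A \to B\,(B \BS A)$, and $A \to a$ for $a \triangleright A$) is the standard proof and is correct: reduction trees and derivation trees coincide. Your second direction via Greibach normal form, assigning $a \triangleright (\cdots((A/B_k)/B_{k-1})\cdots)/B_1$ for each production $A \to a B_1 \cdots B_k$, is exactly the ``classical construction'' that the paper itself recalls (without proof) at the beginning of Section~3. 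The one soft spot is that you leave the delicate half of this direction --- that an \emph{arbitrary} reduction of assigned categories to $S$ mirrors a derivation --- as a stated intention, and the leftmost-reduction/``complete block'' normalization you sketch is not the most direct way to close it. Note that you already hold the needed tool: by your own subformula property, every category occurring in any reduction is a subcategory of an assigned category, and since the assigned categories have only primitive denominators, every reduction step is forced to have the form $X/q;\,q \to X$ with $q$ primitive. An induction on the reduction tree (the root splits the string into a prefix reducing to $E/q$ and a suffix reducing to $q$) then proves the statement: if $C_1;\ldots;C_n$ reduces to $(\cdots(p/q_m)\cdots)/q_1$, then $p \Rightarrow^{*} a_1 \cdots a_n\, q_1 \cdots q_m$ in the grammar; taking the reduced category to be the primitive start symbol $S$ gives exactly the inclusion you need. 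With that substitution your proof is complete and coincides with the classical one.
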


\subsection{Additional terminology}

Some extra terminology is used in the paper to facilitate arguments involving categories.
For every subformula $B \BS A$ or $A / B$ of a category,
the category $A$ is called a \emph{numerator}
and $B$ is a \emph{denominator}.
For every category $C$,
the sets of all categories occurring therein as numerators and as denominators
are denoted as follows.

\begin{definition}
For every category $C$, the set of its numerators $num(C)$
and the set if its denominators $den(A)$
are defined inductively on its structure.
\begin{itemize}
\item
	For a primitive category $C=p$,
	let $num(p) = \varnothing$ and $den(p) = \varnothing$.
\item
	Let $num(B \BS A) = num(A/B) = \{A\} \cup num(A)$
	and $den(B \BS A) = den(A/B) = \{B\} \cup den(A)$.
\end{itemize}
The definition is extended to a string of categories $A_1; \ldots; A_n$
as $num(A_1 \ldots A_n)=\bigcup_{i=1}^n num(A_i)$
and $den(A_1 \ldots A_n)=\bigcup_{i=1}^n den(A_i)$.
\end{definition}

One can state the following very simple but useful property.
\begin{proposition}
If a string of multiple categories
is reducible to a single category $A$,
then at least one of the categories in this string
has $A$ in the set of its numerators.
\end{proposition}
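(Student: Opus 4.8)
The plan is to argue by induction on the number $n \ge 2$ of categories in the string, with the whole argument resting on one structural feature of the reduction rules. First I would record the following observation: whenever two adjacent categories are merged into one, the category produced is exactly the numerator of one of the two merged categories. Indeed, a forward reduction turns $A/B;\, B$ into $A$ and a backward reduction turns $B;\, B \BS A$ into $A$; in either case the produced category $A$ is the numerator of $A/B$ (respectively $B \BS A$), so that $A \in num(A/B)$ (respectively $A \in num(B \BS A)$) directly by the definition of $num$.

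For the base case $n = 2$, I would note that a string $A_1; A_2$ reducing to a single category $A$ must do so in exactly one step, and then the observation above immediately yields $A \in num(A_1)$ or $A \in num(A_2)$.

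For the inductive step with $n \ge 3$, the key idea is to peel off the \emph{first} reduction in a reduction of the string to $A$, rather than the last. This first reduction merges two adjacent categories, say $C$ and $D$, into a single category $P$, producing a string of $n - 1 \ge 2$ categories that is still reducible to $A$. Applying the induction hypothesis to this shorter string, I obtain a category $E$ in it with $A \in num(E)$. If $E$ is one of the categories carried over unchanged from the original string, then it already witnesses the claim. The only other possibility is $E = P$, i.e. $A \in num(P)$; here the observation shows that $P$ is the numerator of $C$ or of $D$, say of $C$, so that $C$ has the shape $P/B$ or $B \BS P$, and the definition gives $num(C) = \{P\} \cup num(P)$. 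Hence $A \in num(P) \subseteq num(C)$, and since $C$ belongs to the original string the induction closes.

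I do not expect a serious obstacle, as the statement is elementary; the only point that needs care is the transitivity-type inclusion $num(P) \subseteq num(C)$ valid whenever $P$ is the numerator of $C$, which falls straight out of the inductive definition of $num$. The one genuine design choice is to track the first reduction instead of the last: this keeps the original categories visible throughout, whereas starting from the last reduction would force a descent through the derivation tree with repeated appeals to the same inclusion.
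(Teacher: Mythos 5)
Your proof is correct. It is worth comparing it with the paper's own argument, because the two are organized differently. The paper's proof is a two-line remark that inspects the \emph{last} step of the reduction: that step collapses either $A/B;\,B$ or $B;\,B \BS A$ into $A$, so one of these two categories has $A$ among its numerators. As written, however, this exhibits a witness only in the intermediate two-category string, not in the original one; to transfer the witness back to the original string one needs the observation that a reduction step never creates new numerators (since $num(A) \subseteq num(A/B) = \{A\} \cup num(A)$, the numerator set of a string is non-increasing along reductions), and the paper leaves this descent implicit. Your induction, which peels off the \emph{first} reduction step instead, makes exactly this bookkeeping explicit: the inclusion $num(P) \subseteq num(C)$ that you isolate is the same monotonicity fact, but arranged so that at every stage the witness is a category of the original string, and the induction closes without any appeal to unstated invariants. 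So both arguments hinge on the same elementary inclusion; the paper's version is shorter but glosses over the step from the final reduction back to the original categories, while yours is fully self-contained --- your closing remark about tracking the first step rather than the last identifies precisely the point the paper's proof leaves tacit.
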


Indeed, if there are at least two categories in the string,
then the last step in a derivation of $A$
must collapse two categories to $A$,
and these could be either $A/B; B$, for some $B$, or $B; B \BS A$.

\subsection{The case of unique category assignment}

A categorial grammar $(\Sigma, \PC, \triangleright, S)$
is said to be \emph{with unique category assignment}
if for every symbol $a \in \Sigma$
there is a unique category $A$ with $a \triangleright A$.

\begin{example}\label{BCG_example_2}
The categorial grammar $(\Sigma, \PC, \triangleright, p)$,
where $\Sigma=\{a,b,c\}$, $\PC = \{ p, q \}$,
the category assignments are
$c \triangleright p$,
$a \triangleright q/p$ and
$b \triangleright q \BS p$,
and $p$ is the target category,
describes the language $\{a^n c b^n \mid n \geqslant 0 \}$.
\end{example}

In the above grammar, to put it plainly,
$c$ is always the center marker,
$a$ is always appended to a central substring from the left,
and $b$ is always appended from the right to a central substring extended with $a$.
But as long as any symbol may have more than one syntactic role,
categorial grammars with unique category assignment
can no longer express that;
e.g., the simplest possible context-free grammar $A \to aA \ | \ a$
has $a$ either in the role of a symbol pre-pended to a suffix,
or in the role of a last symbol in a string---
and the proposed subclass of categorial grammars
already cannot describe this language!

\begin{example}
The language $a^+$ is not representable
by any categorial grammar with unique category assignment.
\end{example}
\begin{proof}
Since $a$ is in the language,
the category assignment for $a$ must be $S$.
Then, the string of categories $S;S$ is
not reducible to any category,
and hence no other string is in the language.
\end{proof}

\section{Simulation of multiple category assignment}\label{section_simulation_multiple_assignment}

Let $G=(\Sigma, N, R, S)$ be a context-free grammar
in the Greibach normal form,
that is, with all rules of the form $X \to a$, $X \to a Y$ or $X \to a Y Z$,
with $X, Y, Z \in N$ and $a \in \Sigma$.

For every symbol $a \in \Sigma$,
consider all rules with $a$ on the right-hand side.
In the classical construction of an equivalent categorial grammar,
every such rule induces a category assigned to $a$:
$a \triangleright X$ for each rule $X \to a$,
$a \triangleright X / Y$ for each rule $X \to a Y$,
and $a \triangleright (X / Y) / Z$ for each $X \to a Y Z$.
This is an assignment of multiple categories to a single symbol.

An idealized goal would be to construct a single string of categories,
that would reduce to each of the desired categories.
Such a string would effectively replace $a$.
However, it will soon be demonstrated that this is impossible as stated:
e.g., no string of categories can be reducible to $X$ and to $X/Y$,
where $X$ and $Y$ are primitive categories.

Can a single string of categories
at all be reducible to two different categories?
In fact, yes! The following small example became the starting point of this research.

\begin{example}
Let $p$ be a category.
Then, the string of categories
$(p/p)/p; p; p; p\BS (p \BS p)$
reduces to $p/p$ and to $p \BS p$.
\begin{equation*}
		\infer{p \BS p}{
			\infer{p}{
				\infer{p/p}{
					(p/p)/p
					&
					p
				}
				&
				p
			}
			&
			p\BS (p \BS p)
		}
		\qquad
		\infer{p/p}{
			(p/p)/p
			&
			\infer{p}{
				p
				&
				\infer{p \BS p}{
					p
					&
					p\BS (p \BS p)
				}
			}
		}
\end{equation*}
\end{example}

However, there are substantial limitations to such multiple reductions.
It turns out that there are certain invariants,
which are preserved in the course of any reduction,
so that whenever a string of categories is reduced to multiple categories,
the resulting categories must be similar under some equivalence relation.

\begin{definition}
[van Benthem~{\cite[Ch.~6]{vanBenthem_language}}]
Let $p$ be a primitive category.
Then, the \emph{count of $p$} in any category $A$,
denoted by $\#_p(A)$,
is an integer defined inductively on the structure of $A$ as follows.
\begin{itemize}
\item
		$\#_p(p)=1$
\item
		$\#_p(q)=0$ for any primitive category $q \neq p$
\item
		$\#_p(A/B)=\#_p(B \BS A)=\#_p(A)-\#_p(B)$
\end{itemize}
The count of $p$ in a string of categories $A_1 \ldots A_\ell$
is $\#_p(A_1 \ldots A_\ell)=\sum_{i=1}^\ell \#_p(A_i)$.
\end{definition}

Reduction rules in categorial grammars
preserve the count of all primitive categories,
simply because $\#_p(A/B; B) = \#_p(A)-\#_p(B)+\#_p(B)=\#_a(A)$,
and similarly for $\#_p(B; B \BS A)$.

\begin{lemma}\label{count_invariant}
Let $u$ and $v$ be two strings of categories, such that $u$ reduces to $v$.
Then, $\#_{p}(u) = \#_{p}(v)$
for any primitive category $p$.
\end{lemma}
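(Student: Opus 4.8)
The plan is to prove the statement by induction on the number of reduction steps in the derivation of $v$ from $u$. The base case is when $u$ reduces to $v$ in zero steps, so that $u = v$, and then trivially $\#_p(u) = \#_p(v)$ for every primitive category $p$.

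For the inductive step, I would consider a derivation of length $n+1$ and split off the very first reduction step, writing $u \Rightarrow u'$ (one step) followed by a derivation $u' \to v$ of length $n$. By the induction hypothesis, $\#_p(u') = \#_p(v)$, so it suffices to establish that a single reduction step preserves the count, that is, $\#_p(u) = \#_p(u')$. This reduces the entire claim to verifying invariance under one application of a reduction rule.

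The key computation is exactly the one already recorded in the remark immediately preceding the lemma. By the definition of reducibility in one step, we have $u = \eta; A/B; B; \theta$ with $u' = \eta; A; \theta$, or else $u = \eta; B; B \BS A; \theta$ with $u' = \eta; A; \theta$. Since the count of a primitive category in a string is the sum of the counts over its constituent categories, the substrings $\eta$ and $\theta$ contribute identically to both $\#_p(u)$ and $\#_p(u')$, so only the two affected categories matter. In the first case I would compute $\#_p(A/B;\, B) = \#_p(A/B) + \#_p(B) = \bigl(\#_p(A) - \#_p(B)\bigr) + \#_p(B) = \#_p(A)$, using the definition $\#_p(A/B) = \#_p(A) - \#_p(B)$; the second case is symmetric, using $\#_p(B \BS A) = \#_p(A) - \#_p(B)$. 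In either case the two collapsing categories contribute the same total as the single resulting category $A$, whence $\#_p(u) = \#_p(u')$, completing the induction.

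There is no real obstacle here: the statement is essentially a formal repackaging of the one-line observation already stated in the text, and the only care needed is the bookkeeping that the unchanged context $\eta;\theta$ factors out of the count additively. The mild point to state cleanly is that ``reducible'' is the reflexive–transitive closure of the one-step relation, so the induction must be set up on the number of steps (or, equivalently, phrased via transitivity of equality once one-step invariance is known).
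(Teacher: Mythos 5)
Your proof is correct and takes essentially the same approach as the paper: the paper's entire justification is the single computation $\#_p(A/B;\,B) = \#_p(A) - \#_p(B) + \#_p(B) = \#_p(A)$ (and symmetrically for $B;\,B \BS A$) stated in the remark just before the lemma, with the extension to multi-step reductions left implicit. Your write-up simply makes the induction on the number of steps and the additive factoring-out of the context $\eta;\theta$ explicit, which matches the intended argument.
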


Hence, any two categories obtained by reducing a single string of categories
must have the same count of any primitive category.
In particular, there is no string of categories $u$
that would be reducible to two distinct primitive categories $p$ and $t$.

On the other hand, this invariant
is not an obstacle to constructing a string of categories
that would be reducible to $p/p$ and to $t/t$,
because both categories have zero count of $p$ and of $t$.
Such a string of categories is constructed below.

\begin{example}\label{example_x}
Let $p$ and $t$ be any categories.
Then, the string of categories
$(p/p)/(t/t); t/t; t; t \BS p; p \BS (t/t)$
is reducible to $p/p$ and to $t/t$.
\begin{equation*}
		\infer{p/p}{
			(p/p)/(t/t)
			&
			\infer{t/t}{
				\infer{p}{
					\infer{t}{
						t/t
						&
						t
					}
					&
					t \BS p
				}
				&
				p \BS (t/t)
			}
		}
		\qquad
		\infer{t/t}{
			\infer{p}{
				\infer{p/p}{
					(p/p)/(t/t)
					&
					t/t
				}
				&
				\infer{p}
				{
					t
					&
					t \BS p
				}
			}
			&
			p \BS (t/t)
		}
\end{equation*}
\end{example}

This example will serve as a model for the subsequent constructions in the general case.
The idea is to circumvent count-based invariants
by \emph{splitting} each primitive category $p$ into $p/p$.
For instance, this would change a category $p/(s \BS r)$
into $(p/p)/((s/s) \BS (r/r))$.
The following notation is introduced.

\begin{definition}
The function
$\phi$ mapping categories to categories
is defined inductively on the structure of the argument category:
\begin{itemize}
\item
		if $p \in Pr$ then $\phi(p) = p / p$,
\item
		if $C = A / B$ then $\phi(C) = \phi(A) / \phi(B)$,
\item
		if $C = B \BS A$ then $\phi(C) = \phi(B) \BS \phi(A)$.
\end{itemize}
\end{definition}

Now the original goal of expressing the choice between multiple categories
can be restated as follows:
for any given categories $A_1, \ldots, A_n$,
construct a string of categories that would reduce
exactly to the categories $\phi(A_1), \ldots, \phi(A_n)$.
It is sufficient to do this for categories
of three forms $p$, $p / q$ and $(p / q) / s$,
because these are the categories obtained in the simulation
of a context-free grammar in the Greibach normal form.

The construction is first presented for the case of two categories.

\begin{lemma}\label{z_A_B_lemma}
Let $A$ and $B$ be two distinct categories
of the form $p$, $p / q$ or $(p / q) / s$.
Then there exists a string of categories $z_{A,B}$, 
such that
\begin{enumerate}\renewcommand{\theenumi}{\Roman{enumi}}
\item	\label{z_A_B_lemma__reduces}
		$z_{A,B}$ reduces to $\phi(A)$ and to $\phi(B)$;
\item	\label{z_A_B_lemma__prefix_suffix}
		no proper suffix of $z_{A,B}$ is reducible to $\phi(A)$,
		and no proper prefix of $z_{A,B}$ is reducible to $\phi(B)$.
\end{enumerate}
\end{lemma}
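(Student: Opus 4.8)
The plan is to generalize the construction seen in Example~\ref{example_x}, which handled the case $A = p/p$ and $B = t/t$, to arbitrary pairs of categories drawn from the three admissible forms. The key structural feature of that example is that the string $z_{A,B}$ begins with a "head" category whose outermost numerator is $\phi(A)$ and ends with a "tail" category whose outermost numerator is $\phi(B)$, with a middle block of categories that can be consumed in two different orders. Concretely, I would build $z_{A,B}$ as a concatenation $\phi(A)/X; \, w; \, X \BS \phi(B)$, where $X$ is a fresh auxiliary category and $w$ is a string engineered so that $w$ reduces to $X$ (allowing the left division to fire and yield $\phi(A)$) and, independently, so that $\phi(A)/X; w$ reduces to $X$ (allowing the right division to fire and yield $\phi(B)$). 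The splitting via $\phi$ is exactly what makes this possible despite Lemma~\ref{count_invariant}: since $\phi(A)$ and $\phi(B)$ both have count zero for every primitive category (each primitive $p$ becomes $p/p$), the count invariant poses no obstruction.

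First I would treat the simplest subcase where both $A$ and $B$ are primitive, say $A = p$ and $B = t$, so $\phi(A) = p/p$ and $\phi(B) = t/t$; here Example~\ref{example_x} already supplies the witness, and I would verify its two properties directly. Next I would handle the remaining combinations of forms $p$, $p/q$, $(p/q)/s$ by a uniform recipe: write $\phi(A)$ and $\phi(B)$ explicitly (e.g.\ $\phi(p/q) = (p/p)/(q/q)$ and $\phi((p/q)/s) = ((p/p)/(q/q))/(s/s)$), and insert enough "filler" categories between the head and the tail so that each outer slash of $\phi(A)$ and of $\phi(B)$ has a matching partner available. The governing idea is that the middle of the string encodes a value that can be fed either to the right-division at the front (producing $\phi(A)$) or to the left-division at the back (producing $\phi(B)$), and the two reduction orders are mutually exclusive commitments.

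The main obstacle will be property~\ref{z_A_B_lemma__prefix_suffix}, the uniqueness/minimality condition: I must ensure that no proper suffix of $z_{A,B}$ already reduces to $\phi(A)$ and no proper prefix already reduces to $\phi(B)$. This is what guarantees that the head and tail categories are genuinely load-bearing and cannot be bypassed, which will be essential when these blocks are concatenated in the grammar-encoding construction of Section~\ref{section_encoding_grammars}. I expect to establish this by a count-based and structural argument: using Lemma~\ref{count_invariant}, any reduct of a proper suffix must preserve the counts of all primitive categories, and I would choose the auxiliary category $X$ (and the fillers) to use primitive categories whose counts in the head category $\phi(A)/X$ force its presence in any derivation of $\phi(A)$. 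In other words, I would arrange that $\phi(A)$ has a nonzero count for some primitive category that appears only in the head, so that discarding the head makes the target count unattainable; symmetrically for $\phi(B)$ and the tail.

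Finally, I would close the argument by exhibiting the two explicit reduction derivations (as in the displayed proof trees of Example~\ref{example_x}) that establish property~\ref{z_A_B_lemma__reduces}, and by carrying out the count bookkeeping that establishes property~\ref{z_A_B_lemma__prefix_suffix}. The routine verification of the individual reduction steps is mechanical once the string is written down; the genuine design work lies in choosing $X$ and the fillers so that both properties hold simultaneously across all pairs of the three category forms. Since there are only finitely many such pairs up to the structure of $A$ and $B$, the construction can be given by a small case analysis or, preferably, by a single parametric template that specializes correctly in each case.
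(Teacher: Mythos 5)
Your head--middle--tail architecture (a first category with $\phi(A)$ as numerator, a last category with $\phi(B)$ as numerator, a middle consumable in two orders) is indeed the paper's architecture, but your template is mis-specified and your plan for Part~\ref{z_A_B_lemma__prefix_suffix} cannot work even in principle. On Part~\ref{z_A_B_lemma__reduces}: if $z_{A,B} = \phi(A)/X;\, w;\, X \BS \phi(B)$ and only $w$ reduces to $X$, the string reduces to $\phi(A)/X;\, X;\, X \BS \phi(B)$ and from there only to $\phi(A);\, X \BS \phi(B)$ or to $\phi(A)/X;\, \phi(B)$ --- both stuck, neither a single category. What is actually needed is that the middle \emph{together with the tail} reduces to $X$, and symmetrically that the head together with the middle reduces to $X$; this is precisely how the paper arranges its strings $u$ and $v$ around $x_{A,t}$ and $y_{t,B}$. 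Moreover, the content of $w$ (your ``fillers'') is never constructed, and that explicit case-by-case construction is most of the work in the paper's Part I.

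The fatal gap is Part~\ref{z_A_B_lemma__prefix_suffix}. You propose to ``arrange that $\phi(A)$ has a nonzero count for some primitive category that appears only in the head,'' but this contradicts what you yourself correctly note earlier: by the definition of $\phi$, the count of \emph{every} primitive category in $\phi(A)$ and in $\phi(B)$ is zero --- that is the entire purpose of the splitting. Consequently, Lemma~\ref{count_invariant} can only exclude fragments whose own counts are nonzero, i.e., prefixes or suffixes that cut a matched pair of fresh primitives apart; it is silent about the dangerous fragments, namely proper prefixes and suffixes made of complete blocks, whose counts are all zero exactly like those of $\phi(A)$ and $\phi(B)$. (In the paper's construction, for instance, the proper suffix beginning at $x_{A,t}$ has zero count of every primitive category, yet must be shown not to reduce to $\phi(A)$ --- including in the delicate situation where $A$ is a subterm of $B$, so that $\phi(A)$ might conceivably be reached by first producing $\phi(B)$ and reducing further; counts give no information there.) Ruling these cases out requires structural tools absent from your proposal: the observation that a string reducible to a single category must contain that category among its numerators, the prefix/suffix properties of the building blocks themselves (Claims~\ref{x_A_t} and~\ref{y_t_B_claim}), and above all Proposition~\ref{main_technical_proposition}, which lets one replace an inner block by one of the two categories it can yield and thereby finish by case analysis. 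Without something playing that role, Part~\ref{z_A_B_lemma__prefix_suffix} remains unproved.
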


The proof of the lemma is structured as follows.
First, the construction of $z_{A,B}$ is presented,
along with a proof of the first part.
Later, the second part will be proved separately.

\begin{proof}[Proof of Part~\ref{z_A_B_lemma__reduces}] 
The desired string of categories $z_{A,B}$ is constructed in several stages, starting from shorter strings with useful properties.

The first to be defined is a string $x_{A,t}$
(where $t$ is a new primitive category, that is, one that does not occur in the category $A$),
which should reduce both to $\phi(A)$ and $t/t$.
Since $A$ can be of three possible forms,
$x_{A,t}$ is defined differently for each of these forms.
\begin{align*}
		x_{p,t} &=
		(p/p)/(t/t); t/t; t; t \BS p; p \BS (t/t) \\
		x_{p/q, \: t} &=
		((p/p)/(q/q))/(t/t); t/t; t; t \BS (q/q); q ; q\BS p; p \BS (t/t) \\
		x_{(p/q) /s, \: t} &=
		(((p/p)/(q/q))/(s/s))/(t/t); t/t; t; t \BS (s/s);  
          s; s\BS (q/q); q; q\BS p; p \BS (t/t)
\end{align*}
	
\begin{ourclaim}\label{x_reductions}
For every category $A$, the string $x_{A,t}$
is reducible to both $\phi(A)$ and $\phi(t)$.
\end{ourclaim}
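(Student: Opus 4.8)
The plan is to treat the three admissible forms of $A$ uniformly, exploiting the common shape of $x_{A,t}$. In every case the string starts with the single category $\phi(A)/(t/t)$, is followed by $t/t$, and then by a \emph{chain} $w_A$ that threads through the primitive categories of $A$; that is, $x_{A,t}=\phi(A)/(t/t);\,t/t;\,w_A$, where $w_p=t;\,t\BS p;\,p\BS(t/t)$, where $w_{p/q}=t;\,t\BS(q/q);\,q;\,q\BS p;\,p\BS(t/t)$, and where $w_{(p/q)/s}=t;\,t\BS(s/s);\,s;\,s\BS(q/q);\,q;\,q\BS p;\,p\BS(t/t)$. The case $A=p$ is exactly Example~\ref{example_x}, so the task is to carry the same two reduction strategies through the two longer chains.

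For the reduction to $\phi(A)$, I would first collapse the suffix $t/t;\,w_A$ to the single category $t/t$, and then finish with the right-division step on $\phi(A)/(t/t);\,t/t$, yielding $\phi(A)$. The suffix collapses by a left-to-right sweep: the leading pair $t/t;\,t$ reduces to $t$, which combines with the next category $t\BS(\cdots)$ to produce a split primitive; this alternation of one right-division collapse and one left-division collapse repeats, threading successively through $s/s$, $q/q$, and so on down to $p$, at which point the final pair $p;\,p\BS(t/t)$ reduces to $t/t$. This is precisely the left-hand derivation of Example~\ref{example_x}, merely lengthened.

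For the reduction to $\phi(t)=t/t$, I would instead begin at the head: the step on $\phi(A)/(t/t);\,t/t$ reduces the first two categories to $\phi(A)$, leaving $\phi(A);\,w_A$. Now the same chain-threading runs with $\phi(A)$ as the leftmost consumer. Each left-division collapse inside $w_A$ produces a split denominator ($s/s$, then $q/q$, and finally a bare $p$), which $\phi(A)$ immediately absorbs, unwinding its own structure one division at a time; after the last absorption the remaining $p;\,p\BS(t/t)$ again reduces to $t/t$. This mirrors the right-hand derivation of Example~\ref{example_x}.

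The only real work is bookkeeping: one must check that, in each of the two reduction orders and for each of the two longer chains, the alternation of right- and left-division collapses always leaves an applicable adjacent pair $X/X;\,X$ or $X;\,X\BS Y$, so that both reduction sequences genuinely exist. I expect this to be the main, though routine, obstacle; it involves no invariant and no induction beyond the observation that the chains for $p/q$ and $(p/q)/s$ extend that of $p$ by inserting one, respectively two, further links of the form ``$\cdots/\cdots;\,\cdots;\,\cdots\BS(\cdots)$''. If desired, the two explicit derivation trees for each case can be exhibited in the style of Example~\ref{example_x} to make the verification fully transparent.
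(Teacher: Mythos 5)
Your proposal is correct and takes essentially the same route as the paper: the paper's proof simply exhibits, for each of the three forms of $A$, exactly the two derivation trees you describe --- the left-to-right sweep that collapses $t/t;\,w_A$ down the chain to $t/t$ before the final head step $\phi(A)/(t/t);\,t/t \to \phi(A)$, and the head-first reduction in which $\phi(A)$ successively absorbs the split denominators ($s/s$, $q/q$, then the bare $p$) produced inside the chain, ending with $p;\,p\BS(t/t) \to t/t$. The only difference is presentational: the paper writes out the explicit trees (citing Example~\ref{example_x} for the case $A=p$, just as you do) rather than describing the common pattern verbally.
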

\begin{proof}
The reductions for the string $x_{p,t}$ are exactly the same as in Example~\ref{example_x}.

The following are reductions for $x_{p/q, \: t}$. 
\begin{equation*}
	\infer{(p/p)/(q/q)}{
		((p/p)/(q/q))/(t/t)
		&
		\infer{t/t}{
			\infer{p}{
				\infer{q}{
					\infer{q/q}{
						\infer{t}{
							t/t
							&
							t
						}
						&
						t\BS (q/q)
					}
					&
					q
				}
				&
				q \BS p
			}
			&
			p \BS (t/t)
		}
	}
\end{equation*}

\begin{equation*}
	\infer{t/t}{
		\infer{p}{
			\infer{p/p}{
				\infer{(p/p)/(q/q)}{
					((p/p)/(q/q))/(t/t)
					&
					t/t
				}
				&
				\infer{q/q}{
					t
					&
					t\BS (q/q)
				}
			}
			&
			\infer{p}{
				q
				&
				q\BS p
			}
		}
		&
		p\BS(t/t)
	}
\end{equation*}

The reductions for $x_{(p/q)/s, \: t}$ are as follows.

\begin{equation*}
	\infer{((p/p)/(q/q))/(s/s)}{
		(((p/p)/(q/q))/(s/s))/(t/t)
		&
		\infer{t/t}{
			\infer{p}{
				\infer{q}{
					\infer{q/q}{
						\infer{s}{
							\infer{s/s}{
								\infer{t}{
									t/t
									&
									t
								}
								&
								t\BS (s/s)
							}
							&
							s
						}
						&
						s\BS (q/q)
					}
					&
					q
				}
				&
				q\BS p
			}
			&
			p\BS (t/t)
		}
	}
\end{equation*}

\begin{equation*}
	\infer{t/t}{
		\infer{p}{
			\infer{p/p}{
				\infer{(p/p)/(q/q)}{
					\infer{((p/p)/(q/q))/(s/s)}{
						(((p/p)/(q/q))/(s/s))/(t/t)
						&
						t/t
					}
					&
					\infer{s/s}{
						t
						&
						t\BS (s/s)
					}
				}
				&
				\infer{q/q}{
					s
					&
					s\BS (q/q)
				}
			}
			&
			\infer{p}{
				q
				&
				q\BS p
			}
		}
		&
		p\BS (t/t)
	}
\end{equation*}

\end{proof}
Next, another string $y_{t,B}$ is defined,
which is reducible to $t/t$ and $\phi(B)$.
Although this property is exactly the same as for $x_{A,t}$,
the string itself is quite different from $x_{A,t}$,
and its further properties stated in
Part~\ref{z_A_B_lemma__prefix_suffix} of this lemma
will actually be symmetric to those of $x_{A,t}$.
For the time being, $y_{t,B}$ is defined in three cases as follows:
\begin{align*}
y_{t,p} &=
(t/t)/(p/p); p/p; p; p \BS t; t \BS (p/p) \\
y_{t, \: p/q} &=
(t/t)/((p/p)/(q/q)); (p/p)/(q/q); q/q; q; q\BS p; p \BS t; t \BS ((p/p)/(q/q)) \\
y_{t, \: (p/q)/s} &=
(t/t)/(((p/p)/(q/q))/(s/s)); ((p/p)/(q/q))/(s/s); s/s; s; \\
&\hspace*{40mm}
s \BS (q/q); q; q \BS p; p \BS t; t \BS (((p/p)/(q/q))/(s/s))
\end{align*}
This string reduces to the same categories
as the earlier $x_{A,t}$. 

\begin{ourclaim}\label{y_reductions}
For every category $B$, the string $y_{t,B}$ is
reducible to both $\phi(t)$ and $\phi(B)$.
\end{ourclaim}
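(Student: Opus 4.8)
The plan is to prove Claim~\ref{y_reductions} in direct parallel with Claim~\ref{x_reductions}, assembling the two reductions of $y_{t,B}$ from a single structural observation. First I would record the uniform shape common to all three definitions: in every case $y_{t,B}$ reads $(t/t)/\phi(B);\ \phi(B);\ c_B;\ t \BS \phi(B)$, where $c_B$ denotes the \emph{core} segment lying strictly between the second category $\phi(B)$ and the final category $t \BS \phi(B)$. Thus $c_B$ is $p;\ p \BS t$ when $B = p$, and it is the longer chain $q/q;\ q;\ q \BS p;\ p \BS t$, respectively $s/s;\ s;\ s \BS (q/q);\ q;\ q \BS p;\ p \BS t$, in the two composite cases. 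The two target reductions will both be built around this core, so isolating it is the organising idea.

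Second I would establish the two facts that drive everything. (a) The core reduces on its own to the primitive $t$: in $c_B$ the denominator of each category matches the numerator (or the whole) of an adjacent one, so a short sequence of forward and backward applications sweeps the chain down to $t$, as one checks directly in each of the three forms. (b) Prefixing $\phi(B)$ to the core still yields $t$, i.e.\ $\phi(B);\ c_B$ reduces to $t$; this works because the outermost denominator of $\phi(B)$ is \emph{exactly} the first category of $c_B$ (namely $p$, $q/q$, or $s/s$), so a single forward application cancels the two, and the remainder of the core carries the result down to $t$ as in~(a). Facts (a) and (b) are verified form by form; these are the only computations, and they are entirely routine, mirroring the explicit trees already given for $x_{A,t}$.

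Third I would derive the two reductions of $y_{t,B}$ from (a) and (b). To reduce to $\phi(B)$, apply the forward rule to the head at once, $(t/t)/\phi(B);\ \phi(B) \rightsquigarrow t/t$, then use~(a) to turn $c_B$ into $t$, leaving $t/t;\ t;\ t \BS \phi(B)$, which is finished by $t/t;\ t \rightsquigarrow t$ followed by $t;\ t \BS \phi(B) \rightsquigarrow \phi(B)$. To reduce to $\phi(t) = t/t$, I would instead leave the head untouched and use~(b) to reduce $\phi(B);\ c_B$ to $t$, then apply $t;\ t \BS \phi(B) \rightsquigarrow \phi(B)$ to regenerate $\phi(B)$, and only now cancel the head via $(t/t)/\phi(B);\ \phi(B) \rightsquigarrow t/t$. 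The one point that requires care — and the place I expect the main obstacle to lie — is precisely this ordering: the two targets demand opposite treatments of the head $(t/t)/\phi(B)$. Cancelling it early is correct for reaching $\phi(B)$, but in the reduction to $t/t$ it must be delayed until the core together with the second category has been collapsed back into a fresh $\phi(B)$; cancelling the head prematurely would strand a $\phi(B)$ that no longer matches anything and block the derivation. Beyond getting this order right and checking that the core collapses in each of the three forms, the argument is pure bookkeeping, structurally symmetric to the proof of Claim~\ref{x_reductions}.
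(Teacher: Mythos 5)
Your proposal is correct and takes essentially the same approach as the paper: the two reductions you assemble are exactly the paper's reduction trees, and your decomposition of $y_{t,B}$ into head $(t/t)/\phi(B)$, then $\phi(B)$, then core, then tail $t \BS \phi(B)$, together with facts (a) and (b), is precisely how the paper handles the longest case $y_{t,\,(p/q)/s}$ (via its substring $u = \phi(B);\, c_B$, shown reducible both to $t$ and to $\phi(B);\, t$). The only difference is presentational: you apply this schema uniformly to all three forms, whereas the paper writes out explicit trees for $y_{t,p}$ and $y_{t,\,p/q}$ and reserves the decomposition for the third case.
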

\begin{proof}
The following are reductions for $y_{t,p}$. 
\begin{equation*}
	\infer{t/t}{
		(t/t)/(p/p)
		&
		\infer{p/p}{
			\infer{t}{
				\infer{p}{
					p/p
					&
					p
				}
				&
				p\BS t
			}
			&
			t\BS (p/p)
		}
	}
\end{equation*}

\begin{equation*}
	\infer{p/p}{
		\infer{t}{
			\infer{t/t}{
				(t/t)/(p/p)
				&
				p/p
			}
			&
			\infer{t}{
				p
				&
				p\BS t
			}
		}
		&
		t\BS (p/p)
	}
\end{equation*}

The following are reductions for $y_{t, \: p/q}$. 

\begin{equation*}
	\infer{t/t}{
		(t/t)/((p/p)/(q/q))
		&
		\infer{(p/p)/(q/q)}{
			\infer{t}{
				\infer{p}{
					\infer{p/p}{
						(p/p)/(q/q)
						&
						q/q
					}
					&
					\infer{p}{
						q
						&
						q\BS p
					}
				}
				&
				p\BS t
			}
			&
			t\BS ((p/p)/(q/q))
		}
	}
\end{equation*}

\begin{equation*}
	\infer{(p/p)/(q/q)}{
		\infer{t}{
			\infer{t/t}{
				(t/t)/((p/p)/(q/q))
				&
				(p/p)/(q/q)
			}
			&
			\infer{t}{
				\infer{p}{
					\infer{q}{
						q/q
						&
						q
					}
					&
					q\BS p
				}
				&
				p\BS t
			}
		}
		&
		t\BS ((p/p)/(q/q))
	}
\end{equation*}

The string $y_{t,\: (p/q)/s}$ is too long to be reduced in one take.
Consider first the reduction of the following substring.
\begin{equation*}
	u = ((p/p)/(q/q))/(s/s); s/s; s; s \BS (q/q); q; q \BS p; p \BS t
\end{equation*}
Then, the string $y_{t,\: (p/q)/s}$ is of the following form. 
$$(t/t)/(((p/p)/(q/q))/(s/s));u;t\BS (((p/p)/(q/q))/(s/s))$$
The substring $u$ is reducible to the string $((p/p)/(q/q))/(s/s);t$ and to the category $t$. 
\begin{equation*}
	\infer{t}{
		\infer{p}{
			\infer{p/p}{
				\infer{(p/p)/(q/q)}{
					((p/p)/(q/q))/(s/s)
					&
					s/s
				}
				&
				\infer{q/q}{
					s
					&
					s\BS (q/q)
				}
			}
			&
			\infer{p}{
				q
				&
				q\BS p
			}
		}
		&
		p\BS t
	}
\end{equation*}

\begin{equation*}
	((p/p)/(q/q))/(s/s) \quad
	\infer{t}{
		\infer{p}{
			\infer{q}{
				\infer{q/q}{
					\infer{s}{
						s/s
						&
						s
					}
					&
					s\BS (q/q)
				}
				&
				q
			}
			&
			q\BS p
		}
		&
		p\BS t
	}
\end{equation*}

Hence, $y_{t,(p/q)/s}$ is reducible to the following strings.
\begin{align*}
	& (t/t)/(((p/p)/(q/q))/(s/s));t;t\BS (((p/p)/(q/q))/(s/s)) \\
	& (t/t)/(((p/p)/(q/q))/(s/s));(((p/p)/(q/q))/(s/s);t;t\BS (((p/p)/(q/q))/(s/s))
\end{align*}
The former string is reducible to $t/t$,
and the latter to $((p/p)/(q/q))/(s/s)$.
\end{proof}

The desired string of categories $z_{A,B}$ should combine $x_{A,t}$ and $y_{t,B}$ in order to be reducible both to $\phi(A)$ and $\phi(B)$.

Let $k_1$, $k_2$, $k_3$ and $k_4$ be four new primitive categories,
and define the desired string of categories $z_{A,B}$ as follows.
\begin{multline*}
	z_{A,B}
	=
	\phi(A) / k_1; k_1/ (t/t); x_{A,t} ; \phi(A) \BS k_2 ;
	k_2 \BS ((t/t) / k_3) ; k_3 / \phi(B) ; y_{t,B} ;
	\\ (t/t) \BS k_4 ; k_4 \BS \phi(B)
\end{multline*}

Denote the following substrings of $z_{A,B}$ by $u$ and $v$
(which not to be confused with the unrelated strings $u,v$
in the proof of Claim~\ref{y_reductions}).
\begin{align*}
	u & = \phi(A) / k_1; k_1/ (t/t); x_{A,t} ; \phi(A) \BS k_2 ;
	k_2 \BS ((t/t) / k_3) ; k_3 / \phi(B) ; y_{t,B} \\
	v & = x_{A,t} ; \phi(A) \BS k_2 ;
	k_2 \BS ((t/t) / k_3) ; k_3 / \phi(B) ; y_{t,B} ; (t/t) \BS k_4 ; k_4 \BS \phi(B)
\end{align*}

Then, the string $z_{A,B}$ can be represented in the following two forms.
\begin{align*}
	z_{A,B} &= v;(t/t) \BS k_4 ; k_4 \BS \phi(B) \\
	z_{A,B} &= \phi(A) / k_1; k_1/ (t/t) ; u
\end{align*}
By the following reductions, both strings $u$ and $v$ are reducible to $t/t$. 
\begin{equation*}
	\infer{t/t}{
		\infer{(t/t)/k_3}{
			\infer{k_2}{
				\infer{\phi(A)}{x_{A,t}}
				&
				\phi(A)\BS k_2
			}
			&
			k_2\BS((t/t)/k_3)
		}
		&
		\infer{k_3}{
			k_3/\phi(B)
			&
			\infer{\phi(B)}{
				\infer{k_4}{
					\infer{t/t}{y_{t,B}}
					&
					(t/t)\BS k_4
				}
				&
				k_4 \BS \phi(B)
			}
		}
	}	
\end{equation*}

\begin{equation*}
	\infer{t/t}{
		\infer{(t/t)/k_3}{
			\infer{k_2}{
				\infer{\phi(A)}{
					\phi(A)/k_1
					&
					\infer{k_1}{
						k_1/(t/t)
						&
						\infer{t/t}{
							x_{A,t}
						}
					}
				}
				&
				\phi(A) \BS k_2
			}
			&
			k_2 \BS ((t/t)/k_3)
		}
		&
		\infer{k_3}{
			k_3/ \phi(B)
			&
			\infer{\phi(B)}{y_{t,B}}
		}
	}
\end{equation*}

Hence, the string $z_{A,B}$ is reducible to $t/t; (t/t) \BS k_4 ; k_4 \BS \phi(B)$,
which is in turn reducible to $\phi(B)$. 
At the same time,
the string $z_{A,B}$ is reducible to $\phi(A) / k_1; k_1/ (t/t) ; t/t$,
which is reducible to $\phi(A)$.

The string $z_{A,B}$ can also be reduced to $t/t$;
but this does not affect the statement of the lemma,
and is not needed in the following constructions.
\end{proof}

The proof of the second condition in Lemma~\ref{z_A_B_lemma}
is based on the following technical result,
which will also be reused later in other proofs.

\begin{proposition}\label{main_technical_proposition}
Let $A$, $B$, $C$ and $D$ be any categories,
and let $\alpha$, $\beta$ and $\gamma$ be any strings of categories. 
Let $w = \alpha;A / C; \beta ; D \BS B; \gamma$,
and assume that $A / C \notin den(w)$,
$D \BS B \notin den(w)$, 
no proper suffix of $\beta$ is reducible to $D$,
and no proper prefix of $\beta$ is reducible to $C$.
Then, if $w$ is reducible to a single category $E$,
then at least one of the strings $\alpha;A / C; C ; D \BS B; \gamma$
and $\alpha;A / C; D ; D \BS B; \gamma$
is also reducible to $E$.
\end{proposition}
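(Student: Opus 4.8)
The plan is to analyze an arbitrary reduction of $w$ to the single category $E$ by viewing it as a binary bracketing tree: its leaves are the categories of $w$ read in order, each internal node is one application of a reduction rule merging two adjacent reduced blocks, and the root carries $E$. Each node spans a contiguous interval of leaf positions, and the intervals of any two nodes are either nested or disjoint. Throughout, I track the two distinguished leaves, the displayed occurrence of $A/C$ and the displayed occurrence of $D \BS B$, and the nodes at which they are merged with a neighbour.

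The decisive first step is to determine how these two leaves can be consumed. I would record the standard fact that every category arising during a reduction is an iterated numerator of some original category of $w$; consequently, whenever a step uses a functor $X/Y$ or $Y \BS X$, its denominator $Y$ is the immediate denominator of an iterated numerator of some $Z$ in $w$, hence $Y \in den(Z) \subseteq den(w)$. Since $A/C \notin den(w)$, the leaf $A/C$ can therefore never be the argument of any step; being a right-division category it also cannot be the right operand of a left-division step; so its only possible fate is to act as the functor of a step $A / C; C \to A$, pairing with the block immediately to its right, which must then reduce to exactly $C$. Symmetrically, $D \BS B \notin den(w)$ forces the block immediately to its left to reduce to exactly $D$.

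Let $S$ be the right-sibling subtree of $A/C$ (a contiguous block beginning just after $A/C$ and reducing to $C$) and $T$ the left-sibling subtree of $D \BS B$ (ending just before $D \BS B$ and reducing to $D$). The hypotheses on $\beta$ pin their free endpoints: were $S$ to end strictly inside $\beta$, it would be a proper nonempty prefix of $\beta$ reducing to $C$, which is forbidden, so $S$ is either exactly $\beta$ or extends to the right past $D \BS B$; symmetrically $T$ is either exactly $\beta$ or extends to the left past $A/C$. A short case analysis now concludes. If $S=\beta$, then $\beta$ reduces, as a single constituent, to $C$, and replacing this block by the single category $C$ (any block may be replaced by another string reducing to the same category without disturbing the surrounding reduction) shows that $\alpha; A/C; C; D \BS B; \gamma$ reduces to $E$; if instead $T=\beta$, the analogous substitution $\beta \mapsto D$ yields $\alpha; A/C; D; D \BS B; \gamma$ reducing to $E$. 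The only remaining configuration is that $S$ reaches past $D \BS B$ and $T$ reaches past $A/C$ at once; this I rule out by the nesting property, since the merge node of $A/C$ would then contain the leaf $D \BS B$ while the merge node of $D \BS B$ would contain the leaf $A/C$, forcing these two distinct merge nodes to be mutually nested and hence equal, a contradiction. (The degenerate case where $\beta$ is empty falls into precisely this impossible configuration, so $w$ has no reduction to a single category and the statement holds vacuously.)

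The main obstacle is the second step: establishing exactly how the two distinguished leaves may be consumed. This rests on the characterization of intermediate categories as iterated numerators and the resulting claim that every functor's denominator lies in $den(w)$. The subtle point is that a denominator buried inside another denominator is irrelevant here, because it is never exposed along the numerator spine that $den$ records; getting this bookkeeping exactly right is what makes the two denominator hypotheses, together with the prefix/suffix conditions on $\beta$, force the clean dichotomy above.
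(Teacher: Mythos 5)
Your proof is correct, and it shares the paper's key ingredients: every intermediate category arising in a reduction is an iterated numerator of some category of $w$, so the denominator of any functor used at any step lies in $den(w)$; hence $A/C$ and $D \BS B$ can only ever act as functors, the prefix/suffix hypotheses pin their argument blocks to be exactly $\beta$, and replacing that block by its value $C$ or $D$ finishes the argument. The genuine difference is in how the interaction between the two distinguished categories is excluded. The paper argues \emph{temporally}: it considers the first reduction step in which either $A/C$ or $D \BS B$ participates; since the other is still untouched at that moment, the argument block of the first one cannot reach across it and must lie inside $\beta$. This avoids any tree formalism and the crossing case never arises, at the price of an explicit remark that steps touching $\gamma$ (or $\alpha$) can be postponed. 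You argue \emph{structurally}: both sibling blocks exist at once in the bracketing tree, and the crossing configuration is killed by nesting of constituent spans. Your route costs more set-up, but it makes the empty-$\beta$ degenerate case visible, which the paper passes over silently (there one needs $C \neq D \BS B$ and $D \neq A/C$, which indeed follow from $C \in den(A/C) \subseteq den(w)$, $D \in den(D \BS B) \subseteq den(w)$ and the two non-membership hypotheses); also, your ``replace a constituent by its value'' principle is exactly the paper's postponement argument in tree form. One compressed spot you should expand: ``mutually nested and hence equal, a contradiction'' needs a line of justification --- overlap of the two parent spans forces nesting; nesting forces one parent's span to end exactly at $D \BS B$ (or start exactly at $A/C$); node equality then follows because that span fits inside neither child of the other parent; and equality is contradictory because both distinguished leaves would then have to be the functor of one and the same step (equivalently $C = D \BS B$ or $D = A/C$), which the denominator hypotheses forbid.
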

\begin{proof}
Consider the step in the reduction of $w$ to $E$,
when one of the categories $A/C$ or $D \BS B$
is first involved in a reduction.
Since neither category occur in the denominators,
either $A/C$ is multiplied by $C$,
or $D$ is multiplied by $D \BS B$.

If this is a reduction of $A/C; C$ to $A$,
then $C$ is obtained from a prefix of $\beta ; D \BS B; \gamma$.
Since the category $D \BS B$ has not been involved in any reductions yet,
this $C$ must have been obtained from a prefix of $\beta$.
This cannot be a proper prefix of $\beta$ by the assumption,
and hence the entire $\beta$ must have been reduced to $C$.
If any of the early derivation steps affect $\gamma$,
they can be postponed until the reduction of $\beta$ to $C$ is complete.
The resulting reduction begins by transforming $\alpha;A / C; \beta ; D \BS B; \gamma$
to $\alpha;A / C; C ; D \BS B; \gamma$,
and then eventually reduces it to $E$, as claimed.

The case of a reduction of $D; D \BS B$ to $B$,
where $D$ has been obtained by reducing some suffix of $\alpha;A / C; \beta$,
is symmetric:
$A/C$ remains intact, hence $D$ is obtained from a suffix of $\beta$,
which cannot be a proper suffix by the assumption.
Therefore, $\beta$ is reducible to $D$,
and the reduction of $\alpha;A / C; \beta ; D \BS B; \gamma$ to $E$
goes through the intermediate string of categories $\alpha;A / C; D ; D \BS B; \gamma$.
\end{proof}

\begin{proof}[Proof of Part~\ref{z_A_B_lemma__prefix_suffix} of Lemma~\ref{z_A_B_lemma}]
It has to be proved that proper prefixes of $z_{A,B}$ cannot be reduced to $\phi(B)$
and proper suffixes of $z_{A,B}$ are not reducible to $\phi(A)$.
The proof is a case study of all proper prefixes and suffixes.
For those prefixes and suffixes that do not split $x_{A,t}$ nor $y_{t,B}$
(that is, contain either the whole substring, or no part thereof),
the desired property
shall be obtained using Proposition~\ref{main_technical_proposition}. 
And if any of these substrings are split,
then the proof relies on the following properties of their prefixes and suffixes,
which will be established first.

\begin{ourclaim}\label{x_A_t}
For every category $A$, 
no proper suffix of $x_{A,t}$ is reducible to $\phi(A)$
and no proper prefix of $x_{A,t}$ is reducible to $(t/t)$.
\end{ourclaim}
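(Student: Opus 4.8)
The plan is to derive both halves of the claim from the elementary proposition stated above — that a string of two or more categories can reduce to a single category $E$ only if some category in the string has $E$ among its numerators. The whole argument hinges on locating exactly which categories of $x_{A,t}$ carry $\phi(A)$ and $t/t$ in their numerator sets. In all three cases the first category of $x_{A,t}$ is $\phi(A)/(t/t)$ and its last category is $p \BS (t/t)$, where $p$ is the leftmost primitive of $A$. The key observation I would establish is that $\phi(A)$ occurs as a numerator \emph{only} in the first category $\phi(A)/(t/t)$, and $t/t$ occurs as a numerator \emph{only} in the last category $p \BS (t/t)$.

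Granting these two facts, the suffix statement follows immediately: by the numerator property, any proper suffix consisting of two or more categories that reduced to $\phi(A)$ would have to contain a category with $\phi(A)$ in its numerator set, but the only such category is the first one, which no proper suffix contains. The remaining proper suffix is the single final category $p \BS (t/t)$; its outermost operation is $\BS$, whereas $\phi(A)$ always has outermost operation $/$, so they differ and the suffix does not reduce to $\phi(A)$ either. The prefix statement is handled dually: $t/t$ lies in the numerator set of the final category alone, so any multi-category proper prefix reducing to $t/t$ would have to include that final category, which it cannot, and the single remaining proper prefix $\phi(A)/(t/t)$ is visibly not $t/t$.

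The only genuine work is the structural scan that establishes the two ``numerator appears only at an endpoint'' facts, carried out across the forms $p$, $p/q$ and $(p/q)/s$. This amounts to checking that every intermediate category of $x_{A,t}$ — each being $t/t$, a lone primitive, or a chain link such as $t \BS (s/s)$, $s \BS (q/q)$ or $q \BS p$ — is assembled from subformulas strictly smaller than $\phi(A)$, so none of them can contain $\phi(A)$ as a numerator; and that, because $t$ is chosen fresh and hence does not occur in $A$, the category $t/t$ cannot appear anywhere inside $\phi(A)$, keeping it out of the numerator set of the first category. I expect this bookkeeping, rather than any conceptual difficulty, to be the main obstacle, and it is essentially routine; in particular, the general Proposition~\ref{main_technical_proposition} is not needed here, since the elementary numerator property already settles both directions.
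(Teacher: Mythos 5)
Your proposal is correct and takes essentially the same route as the paper: the paper's proof likewise rests on the observation that $\phi(A)$ occurs as a numerator only in the first category of $x_{A,t}$ and $t/t$ only in the last, and then invokes the elementary numerator proposition to rule out proper suffixes and prefixes. Your explicit treatment of the single-category prefix/suffix (where the proposition does not apply and one just compares categories directly) is a minor refinement that the paper leaves implicit.
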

\begin{proof}
The string $x_{A,t}$ has only one category with $t/t$ in its set of numerators,
and this is the last category in the string. 
Hence, no proper prefix is reducible to $t/t$.

At the same time, in the string $x_{A,t}$,
the category $\phi(A)$ occurs in the set of numerators only for the first category.
No proper suffix can then be reduced to $\phi(A)$.
\end{proof}

\begin{ourclaim}\label{y_t_B_claim}
For every category $B$,
no proper suffix of $y_{t,B}$ is reducible to $(t/t)$
and no proper prefix of $y_{t,B}$ is reducible to $\phi(B)$.
\end{ourclaim}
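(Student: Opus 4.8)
The plan is to mirror the proof of Claim~\ref{x_A_t} exactly, since the present statement is its symmetric counterpart. The only tool needed is the earlier proposition asserting that a string of several categories reducible to a single category $E$ must contain at least one category having $E$ in its set of numerators. Accordingly, I would reduce the claim to two simple bookkeeping observations about where $t/t$ and $\phi(B)$ can occur as numerators inside $y_{t,B}$, treating the three forms $B=p$, $B=p/q$, $B=(p/q)/s$ uniformly.

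First, for the suffix part, I would observe that $t/t$ belongs to the numerator set of only the first category of $y_{t,B}$, namely $(t/t)/\phi(B)$, and of no other category: a quick inspection of the remaining categories shows their numerator sets are drawn from $\{t,p,q,s\}$ together with the various $\phi$-images, but never contain $t/t$. Since every proper suffix excludes the first category, the proposition on numerators forbids any proper suffix of length $\geqslant 2$ from reducing to $t/t$; and the unique single-category suffix, the last category $t \BS \phi(B)$, is visibly not $t/t$.

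Second, for the prefix part, I would show that $\phi(B)$ appears as a numerator only in the last category $t \BS \phi(B)$, so that no proper prefix (which omits it) can reduce to $\phi(B)$. Here lies the one genuinely delicate point, and the step I expect to require the most care: in every one of the three cases the second category of $y_{t,B}$ \emph{is literally} $\phi(B)$, which might seem to place $\phi(B)$ into a numerator set earlier than claimed. The resolution is that the proposition speaks of $\phi(B)$ lying in the numerator \emph{set} $num(\cdot)$ of some category, and one checks directly that $\phi(B) \notin num(\phi(B))$ — the set $num(\phi(B))$ contains only the proper sub-numerators $p/p$, $p$, and the like — so the mere syntactic occurrence of the category $\phi(B)$ in the string does not count. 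With this distinction in hand, the last category remains the unique position at which $\phi(B)$ is a numerator, and the argument closes exactly as for the suffix: proper prefixes of length $\geqslant 2$ are blocked by the proposition, while the sole single-category prefix, $(t/t)/\phi(B)$, is plainly not $\phi(B)$.
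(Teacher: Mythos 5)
Your proof is correct and takes essentially the same approach as the paper: the paper's own argument is precisely the observation that $t/t$ occurs as a numerator only in the first category of $y_{t,B}$ and $\phi(B)$ only in the last, combined with the proposition that a multi-category string reducible to a single category must contain that category among its numerators. Your extra care about the literal occurrence of $\phi(B)$ as the second category (noting $\phi(B) \notin num(\phi(B))$) and about the single-category prefix and suffix merely makes explicit what the paper leaves implicit.
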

\begin{proof}
The string $y_{t,B}$ has only one category with $\phi(B)$ in the numerator,
and this is the last category in the string.
Hence, no proper prefix is reducible to $\phi(B)$. 

Similarly, there is only one category in $y_{t,B}$ with $t/t$ in the numerator,
it is the the first category in the string,
and so no proper suffix can be reduced to $t/t$. 
\end{proof}

The proof is given separately for the cases of a suffix and of a prefix.

\textit{The case of a suffix:}	
First consider any proper suffix of $z_{A,B}$
that begins not at the borders of $x_{A,t}$ and of $y_{t,B}$
and not inside either of these substrings.
There are four possible starting points of such suffixes.
\begin{equation*}
	\phi(A) / k_1
	\stackrel{\downarrow}{;}
	k_1/ (t/t);
	x_{A,t};
	\phi(A) \BS k_2
	\stackrel{\downarrow}{;}
	k_2 \BS ((t/t) / k_3)
	\stackrel{\downarrow}{;}
	k_3 / \phi(B);
	y_{t,B};
	(t/t) \BS k_4
	\stackrel{\downarrow}{;}
	k_4 \BS \phi(B)
\end{equation*}
On both sides of each of these four arrows
there are categories involving a new basic category
($k_1$, $k_2$, $k_3$ and $k_4$, respectively).
If they are separated, then the suffix containing only one of them
cannot be reduced to $\phi(A)$,
which makes these cases impossible.

Another possibility is that a suffix
begins right after $x_{A,t}$ or $y_{t,B}$.
\begin{equation*}
	\phi(A) / k_1
	;
	k_1/ (t/t);
	x_{A,t}
	\stackrel{\downarrow}{;}
	\phi(A) \BS k_2
	;
	k_2 \BS ((t/t) / k_3)
	;
	k_3 / \phi(B);
	y_{t,B}
	\stackrel{\downarrow}{;}
	(t/t) \BS k_4
	;
	k_4 \BS \phi(B)
\end{equation*}
In either case, the first category in the suffix
has division to the left, and hence could participate only in a reduction
with some larger category on the right.
However, this first category
does not occur in any denominator,
and hence no other category can be reduced by it.

The next possibility is that the suffix begins 
\textbf{inside $x_{A,t}$}.
Suppose the whole thing is reduced to one category.
Then, a single category $k_2$ must be obtained at some step of the reduction.
But to obtain $k_2$, the chosen suffix of $x_{A,t}$ need to be reduced to $\phi(A)$, which is impossible by Claim~\ref{x_A_t}.

Let the suffix begin \textbf{immediately before $x_{A,t}$}.
Like in the previous case,
$k_2$ must be obtained,
and then the only option is to reduce $x_{A,t}$ to $\phi(A)$,
which is possible this time.
Then, the substring $x_{A,t} ; \phi(A) \BS k_2 ; k_2 \BS ((t/t) / k_3)$
must be reduced to $(t/t) / k_3$.
Then the following string must be reduced to $\phi(A)$.
\begin{equation*}
	(t/t) / k_3;
	k_3 / \phi(B);
	y_{t,B};
	(t/t) \BS k_4
	;
	k_4 \BS \phi(B)
\end{equation*}
Even though $\phi(A)$ is not explicitly written in the above string of categories,
the possibility of reducing this string to $\phi(A)$ cannot be ruled out right away,
because if $A$ is a subterm of $B$,
then, potentially, one could obtain $\phi(A)$
by first reducing a substring to $\phi(B)$,
and then applying further reductions to obtain $\phi(A)$.
The argument uses Proposition~\ref{main_technical_proposition} and the properties of $y_{t,B}$. 
By Claim~\ref{y_t_B_claim},
no proper suffix of $y_{t,B}$ is reducible to $t/t$,
and no proper prefix of $y_{t,B}$ is reducible to $\phi(B)$.
Then, by Proposition~\ref{main_technical_proposition} for
$A/C = k_3/\phi(B)$, $\beta = y_{t,B}$ and $D \BS B = (t/t)\BS k_4$,
if this string is reducible to $\phi(A)$,
then one of the following two strings must be reducible to $\phi(A)$:
\begin{equation*}
	(t/t) / k_3;
	k_3 / \phi(B);
	\phi(B);
	(t/t) \BS k_4
	;
	k_4 \BS \phi(B)
\end{equation*}
\begin{equation*}
	(t/t) / k_3;
	k_3 / \phi(B);
	t/t;
	(t/t) \BS k_4
	;
	k_4 \BS \phi(B)
\end{equation*}
However, the former string is reducible only to $\phi(B)$,
and the latter only to $t/t$.

If the suffix begins \textbf{immediately before $y_{t,B}$}
and is reducible to $\phi(A)$,
then $k_4$ must be obtained in the course of the reduction,
and accordingly the substring $y_{t,B}$ must be reduced to $t/t$.
Next, $t/t$ and $(t/t) \BS k_4$ must be reduced to $k_4$,
and, at last, $k_4$ and $k_4 \BS \phi(B)$ have to be reduced to $\phi(B)$.
So the only possible category obtained in such a reduction is $\phi(B)$.

Finally, let the suffix begin \textbf{inside $y_{t,B}$},
and suppose the whole thing is reduced to one category.
Then, $k_4$ must be obtained, and the chosen suffix of $y_{t,B}$
need to be reduced to $t/t$, which is impossible. 
This concludes the proof in the case of a suffix.

\textit{The case of a prefix:}
The proof is similar, but not entirely symmetric to the case of a suffix.

Again, the chosen prefix cannot end at the borders of $x_{A,t}$ and $y_{t,B}$, 
because this would separate new primitive categories. 

Moreover, the chosen prefix cannot end right before $x_{A,t}$ and $y_{t,B}$, 
because the last categories of such strings cannot participate in any further reduction. 
Furthermore, the chosen prefix cannot end inside of $x_{A,t}$ and $y_{t,B}$, 
also because the last categories of such strings cannot be reduced.

Thus, the prefix can end either right after $x_{A,t}$, or right after $y_{t,B}$. 

The first case is ruled out right away,
because for the category $k_1/(t/t)$ to be reduced,
the string $x_{A,t}$ has to be reduced to $t/t$.
Hence, such a suffix is reducible only to $\phi(A)$. 

Now let the suffix end right after $y_{t,B}$.
Hence, $y_{t,B}$ has to be reduced to $\phi(B)$ to obtain $k_3$.
Then the following string must be reduced to $\phi(B)$.
\begin{equation*}
	\phi(A) / k_1 ; k_1/ (t/t);x_{A,t};
	\phi(A) \BS k_2;k_2 \BS ((t/t) / k_3);k_3 
\end{equation*}
By Claim~\ref{x_A_t}, no proper prefix of 
$x_{A,t}$ is reducible to $t/t$, and no proper
suffix of $x_{t,B}$ is reducible to $\phi(A)$.
Then, by
Proposition~\ref{main_technical_proposition} for
$A/C = k_1/ (t/t)$, $\beta =x_{A,t}$ and $C\BS D = \phi(A) \BS k_2$,
one of the following strings must be reducible to $\phi(B)$:
\begin{align*}
	\phi(A) / k_1;k_1/ (t/t);t/t;\phi(A) \BS k_2;
	k_2 \BS ((t/t) / k_3);k_3 \\
	\phi(A) / k_1;k_1/ (t/t);\phi(A);
	\phi(A) \BS k_2;k_2 \BS ((t/t) / k_3);k_3 
\end{align*}
This, however, is not the case:
the former string is reducible only to $t/t$,
and the latter only to $\phi(A)$.
\end{proof}

A substring $z_{A,B}$
reducible to $\phi(A)$ and $\phi(B)$
simulates the assignment of multiple categories:
now the general idea is to use this substring
instead of a symbol,
whenever two categories $\{A,B\}$ need to be assigned to that symbol.
Thus, a string of original symbols would be replaced
by a string of substrings of the form $z_{A,B}$.
However, the properties of $z_{A,B}$ do not guarantee
that there would be no interaction
between neighbouring substrings $z_{A,B}$ and $z_{C,D}$,
so that $y_{t,B}$ from the first substring
would get contaminated
with $x_{C,t}$ from the second substring,
with unpredictable results.

In order to prevent this, the strings $z_{A,B}$
will be extended by wrapping them with a number of new categories
this impose some limitations on the possible order of reductions,
and eventually eliminate the danger of contamination.
The first step is to apply division
by new \emph{sentinel categories} $\ell$ and $r$
on both sides of each string representing two categories $\{A,B\}$.
This will be a string $u_{A,B}$
that reduces to $\ell \BS (\phi(A)/r)$ 
and $\ell \BS (\phi(B)/r)$.

Denote by $\psi(A)$ the category $\phi(A)$ divided by $r$ on the right
and then by $\ell$ on the left.
\begin{definition}
For every category $A$,
let $\psi(A)=\ell \BS (\phi(A)/r)$.
\end{definition}

The next point of the plan
is to replace the $\phi$-encoding of a category
with the new $\psi$-encoding.
It actually has the same basic properties
as earlier established for the $\phi$-encoding.

\begin{lemma}[cf.~Lemma~\ref{z_A_B_lemma}]\label{u_A_B_lemma}
Let $A$ and $B$ be two categories
of the form $p$, $p / q$ or $(p / q) / s$.
Then:
\begin{enumerate}\renewcommand{\theenumi}{\Roman{enumi}}
\item
\label{u_A_B_lemma__reduces}
there exists a string of categories $u_{A,B}$
that reduces to $\psi(A)$ and to $\psi(B)$,
and, furthermore,
\item
\label{u_A_B_lemma__prefix_suffix}
no proper suffix of $u_{A,B}$ is reducible to $\psi(A)$
and no proper prefix of $u_{A,B}$ is reducible to $\psi(B)$.
\end{enumerate}
\end{lemma}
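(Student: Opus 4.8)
The plan is to build $u_{A,B}$ by reusing the entire scaffolding of the string $z_{A,B}$ from Lemma~\ref{z_A_B_lemma}, but one level up, at the $\psi$-encoding: outer categories will produce the targets $\psi(A)$ and $\psi(B)$ out of a neutral \emph{pivot}, four fresh primitive categories $k_1,k_2,k_3,k_4$ will glue the pieces together, and two inner strings will play the roles of $x_{A,t}$ and $y_{t,B}$. The only genuinely new ingredient is dictated by the count invariant of Lemma~\ref{count_invariant}. Since $\#_\ell(\psi(A))=\#_r(\psi(A))=-1$ while all other counts of $\psi(A)$ vanish (and likewise for $\psi(B)$), any string reducible to $\psi(A)$ and to $\psi(B)$ must have $\#_\ell=\#_r=-1$; in particular the pivot can no longer be $t/t$, whose $\ell$-count is $0$. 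A quick count check also rules out the naive idea of merely re-decorating the endpoints of $z_{A,B}$ with sentinels: replacing only the first and last numerators by $\psi(A)$ and $\psi(B)$ yields $\#_\ell=-2$, so that string reduces to neither target.

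Accordingly, I would fix a fresh primitive $t$ and take the pivot to be $P=\psi(t)=\ell\BS((t/t)/r)$, which has exactly the count profile of $\psi(A)$ and $\psi(B)$. I then need two inner strings $\widehat{x}_A$, reducible to $\psi(A)$ and to $P$, and $\widehat{y}_B$, reducible to $P$ and to $\psi(B)$, each built explicitly by the same case distinction on the form $p$, $p/q$ or $(p/q)/s$ as in Claims~\ref{x_reductions} and~\ref{y_reductions}, now with the sentinels woven into the divisions. With these in hand I would set
\begin{multline*}
u_{A,B}=\psi(A)/k_1;\ k_1/P;\ \widehat{x}_A;\ \psi(A)\BS k_2;\ k_2\BS(P/k_3);\\
k_3/\psi(B);\ \widehat{y}_B;\ P\BS k_4;\ k_4\BS\psi(B),
\end{multline*}
and Part~\ref{u_A_B_lemma__reduces} then follows by copying the two reductions of $z_{A,B}$ verbatim, with $\phi$ replaced by $\psi$ and $t/t$ replaced by $P$: to obtain $\psi(A)$ one reduces everything after $\psi(A)/k_1;k_1/P$ to $P$ (using $\widehat{x}_A\to\psi(A)$, the glue, and $\widehat{y}_B\to P$), and symmetrically for $\psi(B)$. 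The placement of $\psi(A)$ and $\psi(B)$ as \emph{denominators} in the glue categories $\psi(A)\BS k_2$ and $k_3/\psi(B)$ is exactly what makes the counts cancel to $-1$, so that the inactive side collapses into the pivot just as in Lemma~\ref{z_A_B_lemma}.

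For Part~\ref{u_A_B_lemma__prefix_suffix} I would follow the proof of Part~\ref{z_A_B_lemma__prefix_suffix} of Lemma~\ref{z_A_B_lemma} line for line. First I would establish the analogues of Claims~\ref{x_A_t} and~\ref{y_t_B_claim}: in $\widehat{x}_A$ the category $\psi(A)$ occurs as a numerator only in the first position and $P$ only in the last, and dually for $\widehat{y}_B$, whence no proper suffix of $\widehat{x}_A$ reduces to $\psi(A)$ and no proper prefix to $P$, and symmetrically for $\widehat{y}_B$. The same enumeration of cut points then applies: cuts separating the fresh categories $k_1,\dots,k_4$, or falling just outside or inside $\widehat{x}_A$ or $\widehat{y}_B$, are impossible for the elementary reasons used before, and the two borderline cases that swallow a whole inner string are handled by Proposition~\ref{main_technical_proposition}, instantiated with $\beta=\widehat{y}_B$ (respectively $\widehat{x}_A$) and the pivot $P$ in place of $t/t$.

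The main obstacle is the construction of the inner strings $\widehat{x}_A$ and $\widehat{y}_B$. Unlike the passage from $x_{A,t}$ to $z_{A,B}$, one cannot obtain them by wrapping $x_{A,t}$ and $y_{t,B}$ with sentinel glue, because the glue would require an auxiliary pivot reducible together with $t/t$, which the count invariant forbids; nor can one wrap every category of $x_{A,t}$ uniformly in $\ell\BS(\cdot/r)$, as that overshoots the $\ell$- and $r$-counts. The outermost connective of $\psi(A)=\ell\BS(\phi(A)/r)$ is a left division, whereas that of $\phi(A)$ is a right division, so the reduction chains inside $\widehat{x}_A$ run in the opposite direction from those inside $x_{A,t}$ and must be rewritten explicitly for each of the three forms; verifying, for these lengthened strings, both the two target reductions and the extreme-numerator property needed above is where the real work lies.
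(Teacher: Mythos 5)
Your scaffold is count-consistent, and \emph{if} the two inner strings existed, Parts~\ref{u_A_B_lemma__reduces} and~\ref{u_A_B_lemma__prefix_suffix} would indeed follow by transplanting the proof of Lemma~\ref{z_A_B_lemma}. But the proposal never constructs $\widehat{x}_A$ and $\widehat{y}_B$, and this is not a verification that can be deferred: it is the entire content of the lemma. Your own closing paragraph identifies the obstruction --- every $\psi$-category $\ell\BS(\phi(C)/r)$ is headed by a left division, so it can act as a functor only after consuming a literal $\ell$ to its left, and the resulting $\phi(C)/r$ needs a literal $r$ to its right; consequently the chain-style double reductions of $x_{A,t}$ and $y_{t,B}$, in which each middle category may be consumed either by its left or by its right neighbour, have no direct analogue among $\psi$-shaped categories unless literal $\ell$'s and $r$'s are planted in the string. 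Your glue contains no such tokens, so they would have to live inside $\widehat{x}_A$ and $\widehat{y}_B$, which are left unspecified. Worse, a string reducible to exactly $\psi(A)$ and $\psi(t)$ with the extreme-numerator properties is precisely an instance of Lemma~\ref{u_A_B_lemma} itself for the pair $(A,t)$; building it by your own recipe would require inner strings for $(A,t')$ with a fresh $t'$, and so on --- an infinite regress. What you have is a correct reduction of the lemma to unproven sub-claims that are essentially the lemma again, not a proof.

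The paper escapes this trap by never redoing the inner construction at the $\psi$ level. It wraps the already-built $z_{A,B}$ twice, inserting the sentinel tokens literally: first into $z'_{A,B} = (\phi(A)/r)/(\phi(B)/r);\ \phi(A)/o_1;\ o_1/\phi(B);\ z_{A,B};\ \phi(A)\BS o_2;\ o_2\BS(\phi(B)/r);\ r;\ \phi(A)\BS(\phi(B)/r)$, which is reducible to $\phi(A)/r$ and to $\phi(B)/r$ (note the lone category $r$ inside), and then analogously on the left with a lone $\ell$ inserted, yielding a string reducible to $\psi(A)$ and $\psi(B)$. These two literal tokens are exactly the device your plan lacks: they bring the counts to $\#_\ell=\#_r=-1$ without any $\psi$-shaped pivot, and they provide the arguments that the left-division-headed intermediate results can combine with. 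The prefix/suffix conditions are then proved by numerator-occurrence and count arguments plus Proposition~\ref{main_technical_proposition}, applied with $\beta = z_{A,B}$ (resp.\ $\beta = z'_{A,B}$), i.e.\ by \emph{reusing} Part~\ref{z_A_B_lemma__prefix_suffix} of Lemma~\ref{z_A_B_lemma} rather than re-proving analogues of Claims~\ref{x_A_t} and~\ref{y_t_B_claim} for new hand-crafted strings. To salvage your route you would have to exhibit $\widehat{x}_A$ and $\widehat{y}_B$ explicitly, and the only evident way to do so is to apply the paper's wrapping to $x_{A,t}$ and $y_{t,B}$ --- at which point your outer layer of glue becomes redundant.
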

\begin{proof}
The construction uses four new categories $o_1$, $o_2$, $o_3$, $o_4$, and is done in two stages.
First, division by $r$ is applied on the right,
resulting in the following intermediate string,
and division by $\ell$ will be applied later.
\begin{multline*}
    z'_{A,B}
    =
    (\phi(A)/r)/(\phi(B)/r); \phi(A)/o_1; o_1/\phi(B); z_{A,B};
    \\
    \phi(A)\BS o_2;
    o_2 \BS (\phi(B)/r); r; \phi(A) \BS (\phi(B)/r) 
\end{multline*}

\begin{ourclaim}\label{z'_reductions}
For every two distinct categories $A$ and $B$,
the string $z'_{A,B}$
is reducible to both $\phi(A)/r$ and $\phi(B)/r$.
\end{ourclaim}
\begin{proof}
	Denote by $u$ and $v$ the following 
	two substrings of the string $z'_{A,B}$. 
	\begin{align*}
		u &= \phi(A)/o_1; o_1/\phi(B); z_{A,B};
		\phi(A)\BS o_2;
		o_2 \BS (\phi(B)/r); r; \phi(A) \BS (\phi(B)/r) \\
		v &= (\phi(A)/r)/(\phi(B)/r); \phi(A)/o_1; o_1/\phi(B); z_{A,B};
		\phi(A)\BS o_2;
		o_2 \BS (\phi(B)/r); r
	\end{align*}
	Thus, the string $z'_{A,B}$ is represented as
	$z'_{A,B}= (\phi(A)/r)/(\phi(B)/r);u $,
	and at the same time as
	$z'_{A,B}= v;\phi(A) \BS (\phi(B)/r)$. 
	The following reductions show that $u$ is reducible to $\phi(B) / r$,
	and $v$ is reducible to $\phi(A)$. 
	\begin{equation*}
		\infer{\phi(B)/ r}{
			\infer{\phi(A)}{
				\phi(A)/o_1
				&
				\infer{o_1}{
					o_1/\phi(B)
					&
					\infer{\phi(B)}{
						\infer{\phi(B)/r}{
							\infer{o_2}{
								\infer{\phi(A)}{
									z_{A,B}
								}
								&
								\phi(A)\BS o_2
							}
							&
							o_2 \BS (\phi(B)/r)
						}
						&
						r
					}
				}
			}
			&
			\phi(A) \BS (\phi(B)/r)
		}
	\end{equation*}

	\begin{equation*}
		\infer{\phi(A)}{
			\infer{\phi(A)/r}{
				(\phi(A)/r)/(\phi(B)/r)
				&
				\infer{\phi(B)/r}{
					\infer{o_2}{
						\infer{\phi(A)}{
							\phi(A)/o_1
							&
							\infer{o_1}{
								o_1/\phi(B)
								&
								\infer{\phi(B)}{
									z_{A,B}
								}
							}
						}
						&
						\phi(A)\BS o_2 
					}
					&
					o_2 \BS (\phi(B)/r)
				}
			}
			&
			r
		}
	\end{equation*}
	Hence, $z'_{A,B}$ is reducible to $\phi(A)/r$ and $\phi(B)/r$.
\end{proof}

\begin{ourclaim}\label{z'_A_B_claim}
For every two distinct categories $A$ and $B$, 
no proper suffix of $z'_{A,B}$ is reducible to $\phi(A)/r$,
and no proper prefix of $z'_{A,B}$ is reducible to $\phi(B)/r$.
\end{ourclaim}
\begin{proof}
	\textbf{The case of a suffix.}
	The desired category $\phi(A)/r$ occurs in a numerator
	only in the first category of the string $z'_{A,B}$. 
	Hence, no proper
	suffix of $z'_{A,B}$ can be reduced to
	$\phi(A)/r$.
	
	\textbf{The case of a prefix.}
	The category $\phi(B)/r$ occurs in $z'_{A,B}$ in a numerator twice:
	in $o_2 \BS (\phi(B)/r)$,
	and in $\phi(A) \BS (\phi(B)/r)$.
	If a proper prefix is reducible to $\phi(B)/r$,
	then at least it must contain $o_2 \BS (\phi(B)/r)$.
	There are two options: the chosen prefix might end
	with $r$, or end short of $r$.
	
	If the chosen prefix ends with $r$,
	then it cannot be reduced to $\phi(B)/r$,
	as the count of $r$ in this prefix can be observed to be zero, 
	whereas the count of $r$ in $\phi(B)/r$ is $-1$.
	
	Then, the only possibility for the chosen prefix
	is to end with $o_2 \BS (\phi(B)/r)$.
	Since no proper prefix of $x$ is reducible to $\phi(B)$
	and no proper suffix of $x$ is reducible to
	$\phi(A)$ due to Lemma~\ref{z_A_B_lemma},
	we can apply Proposition~\ref{main_technical_proposition}
	with $A/C = o_1 / \phi(B)$, $\beta = z_{A,B}$ and
	$D \BS B = \phi(A) \BS o_2$,
	and it asserts that,
	for the chosen suffix to be reducible to
	$\phi(B)/r$, one of the following strings have to
	be reducible to $\phi(B)/r$.
	\begin{align*}
		(\phi(A)/r)/(\phi(B)/r); 
		\phi(A)/o_1; o_1/\phi(B); 
		\phi(A); 
		\phi(A)\BS o_2;
		o_2 \BS (\phi(B)/r)
		\\
		(\phi(A)/r)/(\phi(B)/r); 
		\phi(A)/o_1; o_1/\phi(B); 
		\phi(B); 
		\phi(A)\BS o_2;
		o_2 \BS (\phi(B)/r)
	\end{align*}
	Each of these strings has only one reduction tree,
	so the former is not reducible to any single category,
	and the latter is reducible only to $\phi(A)/r$.
\end{proof}

Resuming the proof of Lemma~\ref{u_A_B_lemma},
the desired string of categories $u_{A,B}$ is obtained
from the intermediate string $z'_{A,B}$ defined above
by a very similar transformation that divides by $\ell$ on the left.
\begin{multline*}
    u_{A,B}
    =
    (\ell \BS (\phi(A)/r)) / (\phi(B)/r) ; \ell; (\ell \BS (\phi(A)/r)) / o_3;
    o_3 / (\phi(B)/r); z'_{A,B} ; \\
    (\phi(A)/r) \BS o_4 ; o_4 \BS (\phi(B)/r);
    (\ell \BS (\phi(A)/r)) \BS (\ell \BS (\phi(B)/r))
\end{multline*}

To prove Part~\ref{u_A_B_lemma__reduces} of Lemma~\ref{u_A_B_lemma},
denote by $u$ and $v$ the following two substrings of the string $u_{A,B}$. 
\begin{align*}
	u &= (\ell \BS (\phi(A)/r)) / o_3; o_3 / (\phi(B)/r); 
	z'_{A,B}; (\phi(A)/r) \BS o_4 ; o_4 \BS (\phi(B)/r) \\
	v &= \ell; (\ell \BS (\phi(A)/r)) / o_3; o_3 / (\phi(B)/r); 
	z'_{A,B}; (\phi(A)/r) \BS o_4 ; o_4 \BS (\phi(B)/r)
\end{align*}

Thus, the whole string $u_{A,B}$ is represented as
$(\ell \BS (\phi(A)/r)) / (\phi(B)/r) ; \ell; u; (\ell \BS (\phi(A)/r)) \BS (\ell \BS (\phi(B)/r))$,
and at the same time as
$(\ell \BS (\phi(A)/r)) / (\phi(B)/r) ; v; (\ell \BS (\phi(A)/r)) \BS (\ell \BS (\phi(B)/r))$. 
The following reductions show that $u$ is reducible to $\psi(A)$,
and $v$ is reducible to $\phi(B)/r$. 

\begin{equation*}
	\infer{\ell \BS (\phi(A)/r)}{
		(\ell \BS (\phi(A)/r)) / o_3
		&
		\infer{o_3}{
			o_3 / (\phi(B)/r)
			&
			\infer{\phi(B)/r}{
				\infer{o_4}{
					\infer{\phi(A)/r}{z'_{A,B}}
					&
					(\phi(A)/r) \BS o_4
				}
				&
				o_4 \BS (\phi(B)/r)
			}
		}
	}
\end{equation*}

\begin{equation*}
	\infer{\phi(B)/r}{
		\infer{o_4}{
			\infer{\phi(A)/r}{
				\ell
				&
				\infer{\ell \BS (\phi(A)/r)}{
					(\ell \BS (\phi(A)/r)) / o_3
					&
					\infer{o_3}{
						o_3 / (\phi(B)/r)
						&
						\infer{\phi(B)/r}{z'_{A,B}}
					}
				}
			}
			&
			(\phi(A)/r) \BS o_4
		}
		&
		o_4 \BS (\phi(B)/r)
	}
\end{equation*}

At last, the following reductions show that $u$ is 
reducible to $\psi(A)$ and $\psi(B)$.

\begin{equation*}
	\infer{\ell \BS (\phi(A)/r) }{
		(\ell \BS (\phi(A)/r)) / (\phi(B)/r)
		&
		\infer{\phi(B)/r}{
			\ell
			&
			\infer{\ell \BS (\phi(B)/r)}{
				\infer{\ell \BS (\phi(A)/r)}{
					v
				}
				&
				(\ell \BS (\phi(A)/r)) \BS (\ell \BS (\phi(B)/r))
			}
		}
	}
\end{equation*}

\begin{equation*}
	\infer{\ell \BS (\phi(B)/r)}{
		\infer{\ell \BS (\phi(A)/r)}{
			(\ell \BS (\phi(A)/r)) / (\phi(B)/r)
			&
			\infer{\phi(B)/r}{
				v
			}
		}
		&
		(\ell \BS (\phi(A)/r)) \BS (\ell \BS (\phi(B)/r))
	}
\end{equation*}
 
This completes the proof of Part~\ref{u_A_B_lemma__reduces}.
of Lemma~\ref{u_A_B_lemma},

Turning to Part~\ref{u_A_B_lemma__prefix_suffix} of Lemma~\ref{u_A_B_lemma},
in the case of a \textbf{prefix},
the desired category $\ell\BS(\phi(B)/r)$ occurs in a numerator
only in the last category of the string $u_{A,B}$.
Hence, no proper prefix of
$u_{A,B}$ can be reduced to $\ell\BS(\phi(B)/r)$.

In the case of a \textbf{suffix},
the category $\ell\BS(\phi(A)/r)$
occurs in a numerator only in the categories
$(\ell\BS(\phi(A)/r))/(\phi(B)/r)$ and 
$(\ell \BS (\phi(A)/r)) / o_3$,
so the suffix must contain at least one of them.
Moreover, the chosen suffix cannot start with $\ell$,
as the count of $\ell$ in this prefix is zero.
So the only possibility for the chosen suffix
is to start with $(\ell \BS (\phi(A)/r)) / o_3$. 
To show that such a string is not reducible to
$\ell\BS(\phi(A)/r)$, we need to apply
Proposition~\ref{main_technical_proposition} for 
$A/C = o_3 / (\phi(B)/r) $, $\beta =z'_{A,B}$ and 
$C\BS D = (\phi(A) /r) \BS o_4$.
So one of the following strings has to be reducible to $\ell\BS(\phi(A)/r)$.
\begin{multline*} 
	(\ell \BS (\phi(A)/r)) / o_1;
	o_1 / (\phi(B)/r); \phi(B)/r ; \\ 
	(\phi(A)/r) \BS o_2 ; o_2 \BS (\phi(B)/r);
	(\ell \BS (\phi(A)/r)) \BS (\ell \BS (\phi(B)/r))
\end{multline*}
\begin{multline*}
	(\ell \BS (\phi(A)/r)) / o_1;
	o_1 / (\phi(B)/r); \phi(A)/r ; \\ 
	(\phi(A)/r) \BS o_2 ; o_2 \BS (\phi(B)/r);
	(\ell \BS (\phi(A)/r)) \BS (\ell \BS (\phi(B)/r))
\end{multline*}
The former string is only reducible to
\begin{equation*}
	(\ell \BS (\phi(A)/r)) ;
	(\phi(A)/r) \BS o_2 ; o_2 \BS (\phi(B)/r);
	(\ell \BS (\phi(A)/r)) \BS (\ell \BS (\phi(B)/r))
\end{equation*}
The latter string is reducible to $\ell \BS (\phi(B)/r)$.
\end{proof}

The next step is to generalize $u_{A,B}$
to support more than two categories,
as well as to wrap it with a final layer of divisions
that finally eliminates cross-contamination
of multiple neighbouring encodings.

For any categories $A_1, \ldots, A_n$,
the goal is to construct a string $w_{A_1, \ldots, A_n}$
that would reduce to $\psi(A_1)$, \ldots, $\psi(A_n)$. 
Let $e_1, \ldots e_{2n-2}$ be new primitive categories,
which do not occur in any $u_{A_i,A_{i+1}}$.
Then the desired string is
\begin{align*}
{}& w_{A_1, \ldots, A_n} = x_1; u_{A_1,A_2}; x_2; u_{A_2,A_3}; x_3; \ldots; x_{n-1}; u_{A_{n-1},A_n}; x_n,
\quad \text{where} \\ 
{}& x_1 = \psi(A_1)/e_1; e_1 / \psi(A_2), \\
{}& x_i = \psi(A_{i-1}) \BS e_{2i-2}; (e_{2i-2} \BS \psi(A_i) )/ e_{2i-1}; e_{2i-1} / \psi(A_{i+1}), \\
{}&  \hspace{8cm} \text{for all }  i \in \{2, \ldots, n-1\}, \\
{}& x_n = \psi(A_{n-1}) \BS e_{2(n-1)}; e_{2(n-1)} \BS \psi(A_n).
\end{align*}
If there is only one category on the list,
then $w_{A_1}$ is defined as the string $\psi(A_1)$.

\begin{lemma}\label{w_A1_An_lemma}
Let $A_1, \ldots, A_n$, with $n \geqslant 1$, be any pairwise distinct categories
of the form $p$, $p / q$ or $(p / q) / s$.
Then the string of categories $w_{A_1, \ldots, A_n}$
reduces to the categories $\psi(A_1), \ldots, \psi(A_n)$.
\end{lemma}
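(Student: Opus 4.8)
The statement to prove is that for every target index $j \in \{1, \ldots, n\}$, the string $w_{A_1,\ldots,A_n}$ is reducible to the single category $\psi(A_j)$; carrying this out for each of $j=1, \ldots, n$ in turn yields reducibility to the whole list $\psi(A_1), \ldots, \psi(A_n)$. Write $P_i = \psi(A_i)$ for brevity. The witnessing reduction genuinely depends on $j$: it is obtained by making each block $u_{A_i,A_{i+1}}$ commit to a particular one of its two admissible outputs, the choice being arranged so that all the connecting gadgets $x_1, \ldots, x_n$ collapse toward position $j$.

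The plan is to first record the \emph{gadget behaviour} of the strings $x_i$, each established by a short explicit reduction. The left end gadget $x_1 = P_1/e_1; e_1/P_2$ has the property that $x_1; P_2 \to P_1$ (reduce $e_1/P_2; P_2 \to e_1$, then $P_1/e_1; e_1 \to P_1$), and symmetrically $P_{n-1}; x_n \to P_n$. For an interior index $2 \le i \le n-1$, the gadget $x_i = P_{i-1}\BS e_{2i-2}; (e_{2i-2}\BS P_i)/e_{2i-1}; e_{2i-1}/P_{i+1}$ satisfies $P_{i-1}; x_i; P_{i+1} \to P_i$: reduce the two flanking inputs via $P_{i-1}; P_{i-1}\BS e_{2i-2} \to e_{2i-2}$ and $e_{2i-1}/P_{i+1}; P_{i+1} \to e_{2i-1}$, then $(e_{2i-2}\BS P_i)/e_{2i-1}; e_{2i-1} \to e_{2i-2}\BS P_i$, and finally $e_{2i-2}; e_{2i-2}\BS P_i \to P_i$. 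Thus each $x_i$ acts as a device that, fed $P_{i-1}$ on its left and $P_{i+1}$ on its right, emits $P_i$ in their place.

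Next, for a fixed $j$, I would reduce every block $u_{A_i,A_{i+1}}$ to a single category by Lemma~\ref{u_A_B_lemma} (which applies since consecutive $A_i$ are distinct): to $P_{i+1}$ when $i < j$, and to $P_i$ when $i \ge j$. Because the blocks $u_{A_i,A_{i+1}}$ are pairwise non-overlapping substrings separated by the $x_i$'s, these reductions are independent and replace each $u$-block by one of its two endpoints. It then remains to reduce the resulting string to $P_j$ by a cascade flowing toward position $j$ from both sides. I would prove by induction on $m$ that the prefix $x_1; u_{A_1,A_2}; \ldots; x_m; u_{A_m,A_{m+1}}$ reduces to $P_m$ for $1 \le m \le j-1$: the base $m=1$ is $x_1; P_2 \to P_1$, and the step feeds the previously obtained $P_{m-1}$ together with the committed right input $P_{m+1}$ of $u_{A_m,A_{m+1}}$ into $x_m$, giving $P_m$ by the interior gadget reduction. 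Hence the entire left block up to $u_{A_{j-1},A_j}$ reduces to $P_{j-1}$; a mirror-image induction shows the right block $u_{A_j,A_{j+1}}; x_{j+1}; \ldots; x_n$ reduces to $P_{j+1}$. Feeding these two outputs into the central gadget then gives $P_{j-1}; x_j; P_{j+1} \to P_j$. The endpoint cases $j=1$ and $j=n$, where one block is empty, are closed directly by the end gadget reductions $x_1; P_2 \to P_1$ and $P_{n-1}; x_n \to P_n$, and the case $n=1$ is immediate since $w_{A_1} = P_1$.

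The conceptual content is precisely the gadget semantics together with the observation that steering the commitments of the $u$-blocks relocates the cascade's meeting point to any prescribed $j$. The main thing to get right is the two-sided bookkeeping and the verification that the local reductions never interfere: each gadget reduction consumes only its own categories and the two single-category inputs produced by its neighbours, so all steps can be linearly ordered (left-to-right through the left block, right-to-left through the right block, then the centre). Unlike the earlier lemmas, no prefix or suffix minimality is required here, so only the reducibility part of Lemma~\ref{u_A_B_lemma} is invoked.
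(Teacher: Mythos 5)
Your proof is correct: the gadget reductions you record for $x_1$, $x_n$ and the interior $x_i$ are exactly right, the commitment scheme (each $u_{A_i,A_{i+1}}$ reduced to $\psi(A_{i+1})$ for $i<j$ and to $\psi(A_i)$ for $i\geqslant j$) is consistent with the denominators $e_{2i-1}/\psi(A_{i+1})$ and $\psi(A_{i-1})\BS e_{2i-2}$ that the gadgets expose, and the two cascade inductions together with the endpoint cases $j=1$, $j=n$ and $n=1$ cover everything; as you note, only Part~\ref{u_A_B_lemma__reduces} of Lemma~\ref{u_A_B_lemma} is needed. Your organization, however, differs from the paper's. The paper proves Lemma~\ref{w_A1_An_lemma} by a single induction on $n$, exploiting self-similarity of the string: it shows that the tail $u_{A_n,A_{n+1}};x_{n+1}$ reduces to $\psi(A_{n+1})$ and that $x_n;\psi(A_{n+1})$ reduces to $\psi(A_{n-1})\BS e_{2n-2};e_{2n-2}\BS\psi(A_n)$, i.e., to the closing gadget of the shorter string, so that $w_{A_1,\ldots,A_{n+1}}$ telescopes to $w_{A_1,\ldots,A_n}$ (yielding $\psi(A_1),\ldots,\psi(A_n)$ by the induction hypothesis) and, via the other branch, to $\psi(A_n);x_{n+1}$ and hence to $\psi(A_{n+1})$. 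If one unrolls that recursion for a fixed target $j$, the resulting reduction performs precisely your commitments, so the two proofs produce the same witnessing reductions; the difference is purely structural. The paper's version buys brevity of verification, since the induction step reuses the lemma's own statement and needs only two displayed reductions; your version buys a transparent global picture---a left-to-right cascade meeting a right-to-left cascade at the prescribed position $j$---at the cost of maintaining two auxiliary inductions and the (correct, and correctly flagged) observation that the non-overlapping $u$-blocks can be reduced independently before the cascades run.
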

\begin{proof}
The case of $n=1$ is clear; for $n \geqslant 2$,
the proof is by induction on $n$.

\textbf{Base case: $n=2$}. Then the string $w_{A_1,A_2}$
is reducible both to $\psi(A_1)$ and $\psi(A_2)$ by the following reductions.
\begin{equation*}
	\infer{\psi(A_1)}{
		\psi(A_1)/e_1
		&
		\infer{e_1}{
			e_1 / \psi(A_2)
			&
			\infer{\psi(A_2)}{
				\infer{e_2}{
					\infer{\psi(A_1)}{u_{A_1, A_2}}
					&
					\psi(A_1) \BS e_2
				}
				&
				e_2 \BS \psi(A_2)
			}
		}
	}
\end{equation*}
\begin{equation*}
	\infer{\psi(A_2)}{
		\infer{e_2}{
			\infer{\psi(A_1)}{
				\psi(A_1)/e_1
				&
				\infer{e_1}{
					e_1 / \psi(A_2)
					&
					\infer{\psi(A_2)}{u_{A_1,A_2}}
				}
			}
			&
			\psi(A_1) \BS e_2
		}
		&
		e_2 \BS \psi(A_2)
	}
\end{equation*}

\textbf{Induction step:}
The following reductions show that the suffix $u_{A_{n},A_{n+1}};x_{n+1}$
of a string $w_{A_1, \ldots, A_{n+1}}$
is reducible to $\psi(A_{n+1})$,
and $x_{n};\psi(A_{n+1})$ is reducible to
$\psi(A_{n-1}) \BS e_{2n-2};e_{2n-2} \BS \psi(A_{n})$:
\begin{equation*}
	\infer{ \psi(A_{n+1}) }{
		\infer{ e_{2n}}{
			\infer{\psi(A_{n})}{u_{A_{n},A_{n+1}}}
			&
			\psi(A_{n}) \BS e_{2n}
		}
		&
		e_{2n} \BS \psi(A_{n+1})
	}
\end{equation*}

\begin{equation*}
	\psi(A_{n-1}) \BS e_{2n-2} \qquad 
	\infer{ e_{2n-2} \BS      \psi(A_{n})  }{
		(e_{2n-2} \BS \psi(A_{n}) )/ e_{2n-1}
		&
		\infer{e_{2n-1}}{
			e_{2n-1} / \psi(A_{n+1})
			&
			\psi(A_{n+1})
		}
	}
\end{equation*}

So the string $w_{A_1, \ldots, A_{n+1}}$,
which is of the form $\ldots;x_n;u_{A_n,A_{n+1}}; x_{n+1}$,
is reducible to the string $\ldots;u_{A_{n-1},A_{n}};x_{n};\psi(A_{n+1})$
and further to $\ldots; u_{A_n,A_{n+1}}; \psi(A_{n-1}) \BS e_{2n-2};e_{2n-2} \BS \psi(A_{n-1})$,
which is exactly the string $w_{A_1, \ldots, A_n}$.
Then, by the induction assumption, the string $w_{A_1, \ldots, A_{n+1}}$
is reducible to $\psi(A_i)$ for $i\in \{1,\ldots,n\}$.

At the same time,
the string $w_{A_1, \ldots, A_{n+1}} = \ldots ;x_n; u_{A_n,A_{n+1}}; x_{n+1}$
is reducible to the string $\ldots ;x_n; \psi(A_{n+1}); x_{n+1}$
(by reducing $u_{A_n,A_{n+1}}$ to $\psi(A_{n+1})$),
and again such a string is reducible to $w_{A_1, \ldots, A_{n}};x_{n+1}$.
Then, by the induction assumption,
the whole string $w_{A_1, \ldots, A_n}$
is reducible to $\psi(A_n);x_{n+1}$.
And the latter string is exactly the string
which was reduced to $\psi(A_{n+1})$ in the first reduction above.
Therefore, the string $w_{A_1, \ldots, A_{n+1}}$ is reducible to $\psi(A_{n+1})$.
\end{proof}

\section{Encoding grammars in grammars with unique category assignment}\label{section_encoding_grammars}

Let $G=(\Sigma, N, R, S)$ be the original context-free grammar
in the Greibach normal form,
and denote the language defined by each nonterminal symbol $X \in N$ by $L_G(X)$.
For every symbol $a \in \Sigma$,
consider all rules with $a$ on the right-hand side,
which are of the form
$X \to a$,
$X \to a Y$ and
$X \to a Z Y$.
Then, the homomorphic image of $a$ is defined as:
\begin{align*}
h(a) &= \ell;w_{A_1, \ldots, A_m};r,
\\ 
& \text{where }
\{A_1, \ldots, A_m\}=\set{X}{X \to a \in R}
\cup
\set{X/Y}{X \to a Y \in R}
\cup 
\\
& \hspace{64mm} \cup \set{(X/Y)/Z}{X \to a Z Y \in R}
\end{align*}

By Lemma~\ref{w_A1_An_lemma}, the string of categories $h(a)$
is reducible to each of the categories $\phi(A_1)$, \ldots, $\phi(A_m)$,
that is, to the encodings of the above categories,
each representing one of the rules of the grammar involving $a$
(or, in other words, one of the possible syntactic roles of $a$).

Then, for every string $w=a_1 \ldots a_n$,
its image $h(a_1 \ldots a_n)$
reduces to a string of categories $B_1 \ldots B_n$,
where each $B_i$ represents one of the syntactic roles of the corresponding symbol $a_i$.
These categories can, so to say, self-assemble
into a parse tree of $w$,
as long as such a tree exists.
The following lemma states this formally,
as well as asserts the converse implication:
reduction to a single category implies the existence of a parse in the grammar $G$.

\begin{lemma}[Main lemma]\label{main_lemma}
For every string $y \in \Sigma^+$
and for every nonterminal symbol $X \in N$,
the string $y$ is in $L_G(X)$
if and only if{\tiny }
its image $h(y)$ is reducible to the category $\phi(X)$.
\end{lemma}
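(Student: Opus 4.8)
The plan is to prove the biconditional by induction on the length of $y$, stated uniformly over all nonterminals $X$ at once, so that the induction hypothesis can be applied to the shorter factors produced by a Greibach rule. The engine behind both directions is that $\phi$ is a homomorphism of category structure, so it commutes with reductions: since $\phi(A/B)=\phi(A)/\phi(B)$ and $\phi(B \BS A)=\phi(B)\BS\phi(A)$, a classical reduction $A/B; B \to A$ lifts verbatim to $\phi(A)/\phi(B); \phi(B) \to \phi(A)$. I would also record at the outset the observation already noted after the definition of $h$: because $w_{A_1,\ldots,A_m}$ reduces to each $\psi(A_i)=\ell \BS (\phi(A_i)/r)$ by Lemma~\ref{w_A1_An_lemma}, the whole block $h(a)=\ell; w_{A_1,\ldots,A_m}; r$ reduces to each $\phi(A_i)$ via $\ell; \ell \BS (\phi(A_i)/r) \to \phi(A_i)/r$ and then $\phi(A_i)/r; r \to \phi(A_i)$.

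For the forward direction I would induct on a leftmost derivation $X \Rightarrow^* y$. If the first rule is $X \to a$, then $y=a$, and $\phi(X)$ is one of the designated categories of $h(a)$, which therefore reduces to $\phi(X)$. If the first rule is $X \to aY$, write $y=a y'$ with $y' \in L_G(Y)$; by the induction hypothesis $h(y')$ reduces to $\phi(Y)$, and reducing $h(a)$ to $\phi(X)/\phi(Y)=\phi(X/Y)$ gives $\phi(X)/\phi(Y); \phi(Y) \to \phi(X)$. The case $X \to aZY$ is analogous: with $y=a u v$, $u \in L_G(Z)$, $v\in L_G(Y)$, reduce $h(a)$ to $(\phi(X)/\phi(Y))/\phi(Z)$ and absorb $\phi(Z)$ then $\phi(Y)$ from $h(u)$ and $h(v)$. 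Here I use that $h$ is a homomorphism, so $h(y)=h(a)\,h(y')$ (respectively $h(a)\,h(u)\,h(v)$) splits exactly along the rule.

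The backward direction is the substantial one. Granting for the moment that every reduction of $h(y)=\ell; w_1; r; \ldots; \ell; w_n; r$ to a single category can be rearranged into a \emph{block-respecting} reduction --- one in which each block $\ell; w_i; r$ is first collapsed to a single category, which must be one of its designated images $\phi(A_{j_i})$ --- the remainder is a clean decoding. The resulting string $\phi(A_{j_1}); \ldots; \phi(A_{j_n})$ consists of purely \emph{right-divisional} categories, since each $A_i$ has the form $p$, $p/q$ or $(p/q)/s$ and so $\phi(A_i)$ uses only the operation $/$ and never $\BS$. A string of such categories can be reduced only by the forward-application rule $C/D; D \to C$, which is precisely the fragment that mirrors leftmost Greibach derivations; using the numerator proposition to locate the category that initiates the assembly, and the injectivity of $\phi$ on these shapes, one reads off the first rule $X' \to a_1\ldots$ and recurses on the sub-blocks that supplied $\phi(Z')$ and $\phi(Y')$, concluding $y \in L_G(X)$ with $X'=X$. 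The base case $n=1$ states exactly that the single-category reducts of $h(a)$ are among the designated $\phi(A_i)$, i.e. that a rule $X \to a$ exists.

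The main obstacle is establishing the block-respecting factorization, which is exactly what the sentinel wrapping was engineered to force. I would argue boundary by boundary at each interface $\ldots r; \ell \ldots$: a partially consumed block is useless, because by Lemmas~\ref{z_A_B_lemma} and~\ref{u_A_B_lemma} no proper prefix of $u_{A,B}$ reduces to $\psi(B)$ and no proper suffix to $\psi(A)$, so a block cannot contribute a usable category without being fully reduced. To show that a neighbouring $x_{C,t}$ cannot contaminate the trailing $y_{t,B}$ of the previous block before the sentinels close it off, I would invoke Proposition~\ref{main_technical_proposition} at the $\ell/r$ interfaces (with the roles of $A/C$ and $D\BS B$ played by the wrapping categories and $\beta$ by the inner block), exactly as in the proofs of Parts~II of those lemmas, so that the reduction can always be normalized to resolve each block locally. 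The count invariant of Lemma~\ref{count_invariant} provides the complementary bookkeeping, confirming that each block's $\ell$ and $r$ are consumed internally. Verifying that these local rearrangements compose into a single global block-respecting reduction, without any residual cross-block interaction, is the delicate point where I expect the real work to lie.
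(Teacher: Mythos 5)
Your easy direction coincides with the paper's: induction on the derivation, reducing $h(a)$ to the designated $\phi$-image of the rule's category and using that $\phi$ commutes with the division structure. Your decoding skeleton for the hard direction also coincides with the paper's: once every block $\ell; w_i; r$ is known to collapse to a single designated category $\phi(C_i)$, the string $\phi(C_1); \ldots; \phi(C_n)$ is purely right-divisional, so only forward application $E/F; F \to E$ can occur, the target must lie in the numerator of $\phi(C_1)$, and splitting the rest into the substrings that supply $\phi(Z)$ and $\phi(Y)$ drives the recursion. (One point to make explicit there: the induction hypothesis is about $h(a_2 \ldots a_m)$, so you must pass from reducibility of $\phi(C_2); \ldots; \phi(C_n)$ to reducibility of $h(a_2 \ldots a_m)$ by first reducing each $h(a_i)$ to $\phi(C_i)$.)

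The genuine gap is the block-respecting factorization itself, which in the paper is a standalone lemma (Lemma~\ref{orderly_reduction_lemma}) carrying most of the weight of the whole argument; you correctly identify it as ``the delicate point where I expect the real work to lie,'' but you do not prove it, and the mechanism you sketch for it would fail. Proposition~\ref{main_technical_proposition} cannot be invoked ``at the $\ell/r$ interfaces with the wrapping categories in the roles of $A/C$ and $D \BS B$'': the sentinels $\ell$ and $r$ are \emph{primitive} categories, not divisions, and moreover they occur as denominators throughout the string (inside every $\psi(C) = \ell \BS (\phi(C)/r)$), so both hypotheses of the proposition are violated. What the paper actually does is apply the proposition only to the inner strings $u_{A_k,A_{k+1}}$, bracketed by $e_{2k-1}/\psi(A_{k+1})$ on the left and $\psi(A_k) \BS e_{2k}$ on the right --- it is the fresh primitive categories $e_j$ that secure the non-denominator condition --- thereby replacing each $u$-substring by a single $\psi$-category. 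Cross-block contamination is then excluded not by Proposition~\ref{main_technical_proposition} at all, but by a count-and-shape analysis specific to $\ell$ and $r$: every category produced inside a block has equal $\ell$- and $r$-counts; the only numerators reducible with $\ell$ are the $\psi(C)$ and the only ones reducible with $r$ are the $\phi(C)/r$; and categories of the forms $D/\ell$, $r \BS D$, $D/(\phi(C)/r)$, $(\phi(C)/r) \BS D$ can never arise in a reduction of the whole string to $X/X$. From these facts it follows that each block $\ell; \alpha_i; r$ must collapse to $\phi(C_i)/r; r$ and then to $\phi(C_i)$ before anything crosses its boundary. None of this follows from Lemma~\ref{count_invariant} together with the prefix/suffix conditions of Lemmas~\ref{z_A_B_lemma} and~\ref{u_A_B_lemma} alone, so as it stands your proposal is a correct plan with its central step missing.
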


The proof is carried out separately in the two directions,
with ``only if'' being the easier one.

\begin{proof}[Proof in the easy direction]
Let a string $y \in \Sigma^+$ be in $L_G(X)$, for some $X \in N$.
The goal is to construct a reduction of $h(y)$ to the category $\phi(X)$.
This is done inductively on the length of $y$.

\textbf{Base case:} $y=a \in \Sigma$.
Then, there is a rule $X \to a$ in $G$,
and accordingly the category $X$ is listed in the image of $a$, that is,
$h(a)=\ell; w_{\ldots, X, \ldots}; r$.
By Lemma~\ref{w_A1_An_lemma},
the string $w_{\ldots, X, \ldots}$ is reducible to $\psi(X)=\ell \BS (\phi(X)/r)$.
Hence, $h(a)$ is reducible to $\phi(X)$.

\textbf{Induction step:} $|y| \geqslant 2$.
Then, $y$ is obtained by a rule of the form $X \to a Y$ or $X \to a Y Z$.

In the former case, let $y=ay'$,
where $y'$ is defined by the nonterminal symbol $Y \in N$.
Then, by the induction assumption, $h(y')$ reduces to $\phi(Y)$.
Furthermore, as the rule $X \to a Y$ is in $G$, 
the category $X/Y$ is listed in the image $h(a)$,
that is, $h(a)=\ell; w_{\ldots, X/Y, \ldots}; r$.
Accordingly, by Lemma~\ref{w_A1_An_lemma}, the string $h(a)$ is reducible to
$\ell ;\ell \BS (\phi(X/Y)/r) ; r = \ell ;\ell \BS (\phi(X)/\phi(Y))/r); r$,
and then to $\phi(X) / \phi(Y)$.
Therefore, $y$ is reducible to 
$\phi(X) / \phi(Y); \phi(Y)$, which is in turn reducible to $\phi(X)$. 

In the case of a rule $X \to a Y Z$
the string $y$ splits to $y=ay'z$,
where $y' \in L_G(Y)$ and $z \in L_G(Z)$.
Then, by the induction assumption twice,
$h(y')$ is reducible to $\phi(Y)$
and $h(z)$ reduces to $\phi(Z)$.
Again, as the rule $X \to a Y Z$ is listed in the image $h(a)$,
this image is then reducible to $\phi((X/Z)/Y)$ by Lemma~\ref{w_A1_An_lemma}.
The latter category equals $(\phi(X)/\phi(Z))/\phi(Y)$.
Then the whole string $h(x)$ reduces first to
$(\phi(X)/\phi(Z))/\phi(Y); \phi(Y); \phi(Z)$ and then to $\phi(X)$.
\end{proof}

What remains to be proved is the ``hard direction'' of the main lemma, that is, that
whenever the homomorphic image $h(y)$ of a string $y$
is reducible to any category corresponding to a nonterminal symbol of the original grammar,
this implies that $y$ is defined by that nonterminal symbol.
The first part of this proof is the following lemma,
which states that if $h(y)$ reduces to $X/X$,
then there is a reduction that does so in an orderly manner
by separately reducing the image of each symbol to a category
representing one of the syntactic roles of that symbol.

\begin{lemma}\label{orderly_reduction_lemma}
If $h(y) = h(a_1\ldots a_n)$ is reducible to category $X/X$,
then there exists a string of the form 
$\phi(C_1); \ldots; \phi(C_n)$, which is reducible to $X/X$,
where each $C_i$ is drawn from the following set.
\begin{equation*}
        \set{X}{X \to a_i \in R}
	\cup
	\set{X/Y}{X \to a_i Y \in R}
	\cup 
	\set{(X/Y)/Z}{X \to a_i Z Y \in R}
\end{equation*}
\end{lemma}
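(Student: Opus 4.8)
The plan is to take an arbitrary reduction of $h(y)=h(a_1\ldots a_n)$ to $X/X$ and reorganize it into one that first collapses each block $h(a_i)=\ell; w^{(i)}; r$ to a single category, and only afterwards lets the blocks interact; here $w^{(i)}$ abbreviates the string $w_{A_1,\ldots,A_m}$ associated with $a_i$. Writing the whole image as $h(y)=\ell; w^{(1)}; r; \ell; w^{(2)}; r; \ldots; \ell; w^{(n)}; r$, the leading sentinels $\ell$ and trailing sentinels $r$ are the only primitive categories sitting at the block boundaries, and the entire argument rests on showing that they seal the blocks off from one another.

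First I would record the boundary behaviour of the sentinels. At a boundary the string reads $\ldots; r; \ell; \ldots$, where both $r$ and $\ell$ occur as standalone primitive categories. A standalone $r$ can only be consumed by a left neighbour of the form $\cdot/r$ or a right neighbour of the form $r\BS\cdot$; its actual right neighbour is the primitive $\ell$, which is of neither form, so the trailing $r$ of a block must eventually combine with a category to its \emph{left}. Symmetrically, the leading $\ell$ of a block must combine with a category to its \emph{right}. I would then show, by induction on the number of reduction steps and a ``first offending step'' argument, that no reduction step ever combines two categories whose material originates in two different blocks: such a step would require block-$i$ material and block-$(i+1)$ material to become adjacent, which is impossible while the separating pair $r;\ell$ is still present, and the previous observation shows each of $r$ and $\ell$ can only be removed by its own block. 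The count invariant of Lemma~\ref{count_invariant} applied to $\ell$ and $r$ (each block has total $\ell$-count and $r$-count equal to zero) makes this bookkeeping precise. This block-isolation step is the main obstacle, since it is what the whole sentinel apparatus was designed to guarantee.

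Granted isolation, the reduction reorganizes so that each block $h(a_i)$ is reduced in isolation to a single category $D_i$, after which $D_1;\ldots;D_n$ reduces to $X/X$. It remains to identify each $D_i$. Since the leading $\ell$ and trailing $r$ are consumed only internally, $w^{(i)}$ must first reduce to a category of the form $\ell\BS(F/r)$, and then $\ell; \ell\BS(F/r); r$ reduces to $F=D_i$. I would then argue that the only categories of this shape to which $w_{A_1,\ldots,A_m}$ reduces are the intended $\psi(A_j)=\ell\BS(\phi(A_j)/r)$: Lemma~\ref{w_A1_An_lemma} gives reducibility to each of these, while the prefix/suffix exclusivity properties established in Lemma~\ref{u_A_B_lemma} and Proposition~\ref{main_technical_proposition}, together with the count invariant applied to the auxiliary primitives $e_1,\ldots,e_{2n-2}$ and those internal to each $u_{A_j,A_{j+1}}$, rule out any other single-category outcome. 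Hence $F=\phi(A_j)$ for some $j$, so $D_i=\phi(C_i)$ with $C_i=A_j$ one of the roles of $a_i$ listed in the statement.

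Finally, assembling the pieces, the reorganized reduction exhibits exactly the string $\phi(C_1);\ldots;\phi(C_n)$, with each $C_i$ drawn from the required set, and this string reduces to $X/X$, as claimed.
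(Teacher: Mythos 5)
Your proposal has a genuine gap at its central step, the ``block isolation'' claim. First, as literally stated (``no reduction step ever combines two categories whose material originates in two different blocks'') the claim is false: the reduction ends in the single category $X/X$, so steps combining material from different blocks must eventually occur; what you actually need is that such steps can be postponed until every block has collapsed to a single category. Second, and more importantly, the argument offered does not establish even this weaker statement. Knowing that the boundary $r$ of block $i$ can only be consumed from within block $i$, and the boundary $\ell$ of block $i+1$ only from within block $i+1$, does not keep the blocks apart: as soon as both sentinels have been consumed --- each by its own block, which your argument permits to happen at any time --- the leftover materials of the two blocks become adjacent and are free to mix while both blocks are still far from collapsed. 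Concretely, the category $\phi(A_m)/r$ occurs among the numerators of the last category $e_{2m-2}\BS\psi(A_m)$ of $w^{(i)}$, so nothing in your adjacency observation rules out a proper suffix of the interior of block $i$ reducing to $\phi(A_m)/r$ and swallowing the trailing $r$ prematurely, after which cross-block contamination is possible. Excluding exactly this scenario is where the paper's proof spends its effort: it proves that every category derivable from the (pre-processed) block interior has equal counts of $\ell$ and of $r$ (Claim~\ref{claim_alpha_1}), that categories of the forms $D/\ell$, $r \BS D$, $D/(\phi(C)/r)$ and $(\phi(C)/r)\BS D$ never arise (Claim~\ref{claim_alpha_4}), and hence that the only category that can touch a boundary sentinel is $\ell\BS(\phi(C)/r)$, with the remaining interior $\widetilde{x}$ forced to be empty.

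Moreover, that count-and-numerator machinery is not available for the raw blocks $\ell; w^{(i)}; r$ your plan works with: the un-collapsed strings $u_{A,B}$ contain categories with unbalanced sentinel counts, e.g.\ $\#_\ell\bigl((\ell \BS (\phi(A)/r))/(\phi(B)/r)\bigr)=-1$ while $\#_r$ of the same category is $0$, and they even contain bare occurrences of $\ell$ and of $r$ in the middle of a block (inside $u_{A,B}$ and $z'_{A,B}$), contrary to your picture of standalone sentinels sitting only at block boundaries. This is why the paper proceeds in the opposite order to yours: it first applies Proposition~\ref{main_technical_proposition} repeatedly to the whole string $h(y)$, using the fresh primitives $e_j$ as the guards $A/C$ and $D \BS B$, to replace every substring $u_{A_j,A_{j+1}}$ by a single category $\psi(A_j)$ or $\psi(A_{j+1})$ while preserving reducibility to $X/X$; only then, on the resulting string $\ell;\alpha_1;r;\ldots;\ell;\alpha_n;r$, do the count and numerator claims hold, forcing each block to collapse to $\phi(C_i)/r; r$ and then to $\phi(C_i)$. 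Your second step correctly names the needed ingredients (Proposition~\ref{main_technical_proposition}, Lemma~\ref{u_A_B_lemma}, the count invariant), but by deferring them until after isolation you make the argument circular in effect: the isolation you assume is a consequence of that analysis, not a prerequisite for it.
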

\begin{proof}

Let $h(y) = h (a_1 \ldots a_n)$ be a string of categories reducible to $X/X$.
Consider the substring
$h(a_1)= \ell; w_{A_1,\ldots,A_m}; r =\ell; x_1; u_{A_1,A_2}; x_2; \ldots; x_m ; r$ of $h(a_1\ldots a_n)$,
where $A_1, \ldots, A_m$ are all categories transcribing the rules of the grammar involving $a_1$,
as described in the definition of $h$. 
So $h(a_1\ldots a_n)$ is string of the following form:
\begin{multline*}
	\ell; \underbrace{\psi(A_1)/e_1 ;e_1 / \psi(A_2)}_{x_1} ; u_{A_1,A_2}; 
		\underbrace{ \psi(A_1) \BS e_2 ; (e_2\BS \psi(A_2))/e_3 ; 
			e_3 / \psi(A_3)}_{x_2} ; u_{A_2, A_3}; x_3; \\ 
		\ldots ;x_m ; r ; h(a_2 \ldots a_n)    
\end{multline*}
By Lemma~\ref{u_A_B_lemma}, no proper prefix of
$u_{A_1,A_2}$ is reducible to $\psi(A_2)$
and no proper suffix of $u_{A_1,A_2}$ is
reducible to $\psi(A_1)$, also $e_1 / \psi(A_2)$ 
and $\psi(A_1) \BS e_2$ are not in the set of denominators
of the whole string, by the definition of $w_{A_1 \ldots A_m}$. 
Then, Proposition~\ref{main_technical_proposition} is applicable to $h(a_1\ldots a_n)$, with the following partition
into $\alpha; A/C; \beta; D \BS B; \gamma$.
\begin{multline*}
	\underbrace{\ell; \psi(A_1)/e_1 }_{\alpha};\underbrace{e_1 / \psi(A_2)}_{A/C} ; \underbrace{u_{A_1,A_2}}_{\beta} ; \underbrace{\psi(A_1) \BS e_2}_{D \BS B} ;\\ \underbrace{(e_2\BS \psi(A_2))/e_3 ; e_3 / \psi(A_3) ;  u_{A_2,A_3} ;  x_3; \ldots ;x_m ; r ; h(a_2 \ldots a_n) }_{\gamma}  
\end{multline*}
So, by Proposition~\ref{main_technical_proposition} 
for $A/C = e_{1}/\psi(A_2)$, 
$\beta = u_{A_1,A_2}$ and $D\BS B=\psi(A_1)\BS e_{2}$,
one of the strings obtained from $h(y)$
by replacing $u_{A_1,A_2}$ with $\psi(A_1)$ or $\psi(A_2)$---that is, one of the following two strings---is reducible to the category $X/X$:
\begin{align*}
	& \psi(A_1)/e_1 ;e_1 / \psi(A_2) ; \underline{\psi(A_1)} ; \psi(A_1) \BS e_2 ;
	(e_2\BS \psi(A_2))/e_3 ; e_3 / \psi(A_3) ; \ldots ;x_m ; r ; \ldots \\ 
	& \psi(A_1)/e_1 ;e_1 / \psi(A_2) ; \underline{\psi(A_2)} ; \psi(A_1) \BS e_2 ;
	(e_2\BS \psi(A_2))/e_3 ; e_3 / \psi(A_3) ; \ldots ;x_m ; r ; \ldots     
\end{align*}
Let $j_1 \in \{1, 2\}$ be such that the $j_1$-th of these strings
(the one in which $u_{A_1,A_2}$ is replaced with $\psi(A_1)$)
is reducible to $X/X$.
Consider the substring $u_{A_2,A_3}$ of that string.
The goal is to apply Proposition~\ref{main_technical_proposition} to that string, with the following partition.
\begin{multline*}
	\underbrace{\ell; \psi(A_1)/e_1 ;e_1 / \psi(A_2) ; \psi(A_{j_1}) ; \psi(A_{i}) \BS e_2 ; (e_2\BS \psi(A_2))/e_3 }_{\alpha};\\
	\underbrace{e_3 / \psi(A_3)}_{A/C} ; \underbrace{u_{A_2,A_3}}_{\beta} ; \underbrace{\psi(A_2) \BS e_4}_{D\BS B} ; 
	\underbrace{\ldots ;x_m ; r ; h(a_2 \ldots a_n) }_{\gamma}  
\end{multline*}
Then, Proposition~\ref{main_technical_proposition} gives two new strings,
in which $u_{A_2,A_3}$ is replaced with $\psi(A_2)$ and with $\psi(A_3)$, respectively,
and one of these strings is reducible to $X/X$.
Let $j_2 \in \{2, 3\}$ be such that
it is the string with $u_{A_2,A_3}$ replaced with $\psi(A_{j_2})$
that is reducible to $X/X$.

The same argument is repeated for each substring $u_{A_k,A_{k+1}}$ of $h(a_1)$,
in total $m-1$ times, until $h(a_1 \ldots a_n)$ is reduced
to a string of the form $\ell;\alpha_1;r;h(a_2);\ldots;h(a_n)$,
which is also reducible to $X/X$, where $\alpha_1$ is a string 
obtained from $w_{A_1, \ldots A_n}$ by reducing each 
$u_{A_i,A_{i+1}}$ either to category $\psi(A_i)$ or to $\psi(A_{i+1})$. 
	
Now we can consider substring $h(a_2)$ in the string 
$\ell;\alpha_1;r;h(a_2);\ldots;h(a_n)$ and repeat the 
above line of reasoning for it, and so on for each $h(a_i)$.
At last we obtain a string of the form $\ell;\alpha_1;r; \ldots; \ell;\alpha_n;r$,
which is reducible to $X/X$, 
where each $\alpha_i$ is obtained by 
reducing all $u_{C,D}$ in $h(a_i)$, each to a single category.
	
Consider any $\alpha_i$ in the resulting string.
This substring consists of categories of two types.
First, there are categories
obtained by the reduction of some $u_{D,E}$ to $\psi(D)$ or $\psi(E)$,
and since each $u_{D,E}$ is a substring of $h(a_i)$,
by the definition of $h(a_i)$,
these are categories of the form $\psi(C)$,
where $C$ is in the following set.
\begin{equation*}
	\set{X}{X \to a_i \in R}
	\cup
	\set{X/Y}{X \to a_i Y \in R}
	\cup
	\set{(X/Y)/Z}{X \to a_i Z Y \in R}
\end{equation*} 
The other kind of categories in $\alpha_i$
are all the remaining categories from
the original string $h(a_i)=\ell; w_{A_1, \ldots, A_n}; r$,
which are all preserved in $\ell; \alpha_i; r$.
Each of these categories is of the form 
$\psi(C)/e_k$,
$e_k/\psi(C)$,
$\psi(C) \BS e_k$,
$e_k \BS \psi(C)$ or
$(e_{k-1}\BS \psi(C))/e_k$,
where $C$ is from the above set of categories
(that rephrase the rules involving $a_i$),
and $e_{k-1}$ and $e_k$
are primitive categories introduced in the definition of $w_{A_1, \ldots, A_n}$.
All kinds of categories that occur in $\alpha_i$ have the following common properties.
	
\begin{ourclaim}\label{claim_alpha_1}
Each category in $\alpha_i$ has equal count of $\ell$ and of $r$.
The only categories occurring among the numerators of $\alpha_i$
that can be reduced with $\ell$ are the categories $\psi(C)$,
where $C$ is from the set above.
Similarly, the only categories to be reduced with $r$ are $\phi(C)/r$,
for $C$ from the same set.
\end{ourclaim}
The proof is by examining all forms of these categories listed above.
Hence, for any category obtained by a reduction of some categories in $\alpha_i$,
the count of $\ell$ in this category is equal to the count of $r$ in it.

\begin{ourclaim}\label{claim_alpha_4}
In the reduction of the string 
$\ell;\alpha_1;r;\ldots;\ell;\alpha_n;r$ 
to the category $X/X$,
no categories of the following forms 
can be obtained:
$D/\ell$, $r \BS D$,  $D/(\phi(C)/r)$ or 
$(\phi(C)/r) \BS D$,
where $C$ and $D$ are any categories.
\end{ourclaim}

This is so, because all the categories of such forms
are not in the set of numerators of the whole string
$\ell;\alpha_1;r; \ldots; \ell;\alpha_n;r$,
which can be seen by examining all forms of categories 
in the set of numerators of $\alpha_i$.

Now it is claimed that in 
the reduction of $\ell;\alpha_1;r; \ldots; \ell;\alpha_n;r$ to category $X/X$,
every substring $\ell;\alpha_i;r$
has to be first reduced to a string of the form $\phi(C) / r; \widetilde{x};r$.
	
Consider the last step in this reduction
when the categories $\ell$ and $r$ from the substring $\ell;\alpha_i;r$
still remain intact.
Let $\ell;x;r$ be the current substring derived from $\ell;\alpha_i;r$.
Then, at the next step,
one of categories $\ell$ or $r$
is reduced with some category $B$.
Then, first of all, the category $B$
must be inside the substring $x$---
for otherwise, if $B$ is to the left of $\ell$ or to the right of $r$,
it would have to be of the form $D/ \ell$ or $r \BS D$,
but such categories cannot be obtained in the reduction
of the whole string to $X/X$ by Claim~\ref{claim_alpha_4}.
Hence, $B$ is one of the categories in the substring $x$,
and so it must be the result of reduction of some categories in 
$\alpha_i$.
Then, by Claim~\ref{claim_alpha_1},
the count of $\ell$ and $r$ in the category $B$ are equal. 
At the same time, $B$ has to 
be reducible with category $\ell$ or $r$, 
and so, by Claim~\ref{claim_alpha_1},
$B = \ell \BS (\phi(C) / r)$ or $B = \phi(C) / r$ for some $C$;
note that the latter case is not possible,
because the actual count of $r$ in such a category is $-1$
and the count of $\ell$ is $0$. 
Then, in the reduction of the whole string to $X/X$,
the substring $\ell;\alpha_i;r$
must have been reduced to $\ell; \ell \BS (\phi(C) / r); \widetilde{x};r$,
where $x=\ell \BS (\phi(C) / r); \widetilde{x}$,
and then further reduced to $\phi(C) / r; \widetilde{x};r$.
	
\begin{ourclaim}
The string $\widetilde{x}$ must be empty,
and hence the substring $\ell; \alpha_i; r$
must have been reduced to $\phi(C) / r; r$.
\end{ourclaim}
	
Suppose the contrary, that is, $\widetilde{x}$ is not empty.
Consider the first reduction, in which one of the categories $r$ or $\phi(C)/r$
from the resulting substring $\phi(C) / r; \widetilde{x};r$ is involved.
These categories cannot be reduced with any categories not from the substring $\widetilde{x}$,
because for that the other category would have to be 
of the form $r \BS D$ or $D / (\phi(C) / r)$;
however, by Claim~\ref{claim_alpha_4},
such categories cannot occur in any reduction of the whole string. 
	
Therefore, $r$ or $\phi(C)/r$ was reduced by 
some category obtained
by a reduction of some substring of $\widetilde{x}$. 
Let it be a category $E$ which was reduced with $r$ or $\phi(C)/r$.
By Claim~\ref{claim_alpha_1},
the count of $\ell$ in this category $E$ is equal to the count of $r$ in this category.
Moreover, by Claim~\ref{claim_alpha_1}, the category $E$
could only be reduced with $r$ if $E=\phi(C)/r$;
but this cannot be the case,
since $\#_{\ell}(\phi(C)/r) = 0$ and $\#_{r}(\phi(C)/r) = -1$.
On the other hand, 
to reduce $E$ with $\phi(C)/r$,
it has to be of the form $(\phi(C)/r)\BS D$ or $r$.
By Claim~\ref{claim_alpha_4}, the category $E$
cannot be of the form $(\phi(C)/r)\BS D$, 
and $E$ cannot be $r$, because $\#_{\ell}(E) = \#_{r}(E)$,
but $\#_{\ell}(r) = 0$ and $\#_{r}(r) = 1$.
	
Therefore, in the reduction of $\ell;\alpha_1;r;\ldots;\ell;\alpha_n;r$ 
to a category $X/X$,
each block $\ell;\alpha_i;r$
was first reduced to a string of the form $\phi(C_i) / r; r$,
where every $C_i$ is in the following set.
\begin{equation*}
	\set{X}{X \to a_i \in R}
	\cup
	\set{X/Y}{X \to a_i Y \in R}
	\cup
	\set{(X/Y)/Z}{X \to a_i Z Y \in R}
\end{equation*}
Similarly to the first part of this proof,
one can infer from this that there is a string of categories of the form
$\phi(C_1) / r; r; \ldots; \phi(C_n) / r; r$,
where each $C_i$ is from the corresponding set defined above,
which is reducible to $X/X$ as well.
	
Finally, the reduction of the latter string $\phi(C_1) / r; r; \ldots; \phi(C_n) / r; r$ to $X/X$
has to reduce each block $\phi(C_i) / r ; r$ to $\phi(C_i)$,
because the categories $r$ cannot be eliminated in any other way.
Therefore, $\phi(C_1); \ldots; \phi(C_n)$ is reducible to $X/X$, as claimed.
\end{proof}

This confirms that the reduction of $h(y)$ to $X/X$
must proceed in the orderly manner,
which is used to complete the proof of Lemma~\ref{main_lemma}.

\begin{proof}[Proof of Lemma~\ref{main_lemma} in the hard direction]
	Let $y=a_1 \ldots a_n \in \Sigma^+$,
	and assume that $h(y)$ is reducible to a category $\phi(X) = X/X$ for some nonterminal $X$.
	It is claimed that the string $y$ must be in $L_G(X)$.
	The proof is by induction on the length of $y$.
	
	\textbf{Base case:} $y = a \in \Sigma$.
	Since $h(a)$ is reducible to a single category $X/X$ by the assumption,
	by Lemma~\ref{orderly_reduction_lemma},
	there exists a one-symbol string of categories $\phi(C)$,
	where 
	$C \in \set{X}{X \to a \in R}
	\cup
	\set{X/Y}{X \to a Y \in R}
	\cup
	\set{(X/Y)/Z}{X \to a Z Y \in R}$,
	which is reducible to $X/X$.
	But since $X/X$ is already a single category,
	this reduction is only possible if $\phi(C)=X/X$, and hence $X=C$.
	Then there is a rule $X \to a \in R$,
	and hence $a$ is in $L_G(X)$, as claimed.
	
\textbf{Induction step:} 
$y=a_1 \ldots a_m$, with $m \geqslant 2$.
As in the base case, since $h(y)$ 
reduces to a single category $X/X$,
by Lemma~\ref{orderly_reduction_lemma},
there exists a string of 
categories $\phi(C_1) \ldots \phi(C_n)$,
which is reducible to $X/X$,
and every $C_i$ belongs to the corresponding set
\begin{equation*}
\set{X}{X \to a_i \in R}
\cup
\set{X/Y}{X \to a_i Y \in R}
\cup \\
\set{(X/Y)/Z}{X \to a_i Z Y \in R}
\end{equation*}

Due to this form of each $C_i$,
with all divisions to the right,
any reductions applied to the 
string $\phi(C_1) \ldots \phi(C_n)$
are of the form $E/F;F \to E$.
This, in particular, means that the resulting category $X/X$
is in the numerator of $\phi(C_1)$,
and there are two possibilities:
either $\phi(C_1) = \phi(X) / \phi(Y)$,
or $\phi(C_1) = (\phi(X) / \phi(Y))/ \phi(Z)$.
	
In the first case, the whole string 
$\phi(C_2) \ldots \phi(C_n)$ is reduced to $\phi(Y)$,
and, by the induction assumption,
the string $a_2 \ldots a_m$ is in $L_{G}(Y)$. 
Moreover, as $\phi(C_1) = \phi(X) / \phi(Y) = \phi(X/Y)$,
the category $C_1$ is $X/Y$.
Hence, there is a rule $X \rightarrow a_1 Y$ 
in the original grammar,
and then $a_1 \ldots a_n \in L_{G}(X)$ by this rule.
	
For the second case, 
if $\phi(C_1) = (\phi(X) / \phi(Y))/ \phi(Z)$,
then the reduction of 
$\phi(C_1);\ldots;\phi(C_m)$ to $\phi(X)$ 
is concluded with the following two steps.
\begin{equation*}
\infer{\phi(X)}{
	\infer{\phi(X) / \phi(Y)}{
		(\phi(X) / \phi(Y))/\phi(Z)
		&
		\phi(Z)
	}
	&
	\phi(Y)
}
\end{equation*}
Accordingly, the string $\phi(C_2) \ldots \phi(C_n)$ 
is split into two substrings,
where the first part $\phi(C_2) \ldots \phi(C_i)$
is reduced to $\phi(Z)$,
and the second part $\phi(C_{i+1}) \ldots \phi(C_n)$
is reduced to $\phi(Y)$.
Then, by the induction assumption,
$a_2 \ldots a_i \in L_G(Z)$ and
$a_{i+1} \ldots a_m \in L_G(Y)$.
Since 
$\phi(C_1)=(\phi(X) / \phi(Y))/ \phi(Z) = \phi((X/Y)/Z)$
then the category $C_1$ is $(X/Y)/Z$.
Hence, there is a rule $X \rightarrow a_1 Z Y$ 
in the original grammar,
and then $a_1 \ldots a_n \in L_{G}(X)$ by this rule.
\end{proof}

Altogether, the following theorem has been proved.

\begin{theorem}
For every context-free language $L \subseteq \Sigma^+$
there exists an alphabet $\Omega$,
a homomorphism $h \colon \Sigma \to \Omega^+$,
and a categorial grammar $G=(\Omega, \PC, \triangleright, S)$
with a unique category assignment,
such that a string $w$ is in $L$
if and only if $h(w)$ is in $L(G)$.
\end{theorem}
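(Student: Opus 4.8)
The plan is to assemble the theorem directly from the Main Lemma (Lemma~\ref{main_lemma}), treating the construction of Section~\ref{section_encoding_grammars} as a black box. First I would fix the input: given a context-free language $L \subseteq \Sigma^+$, which contains no empty string by the standing convention, I would take a context-free grammar $G = (\Sigma, N, R, S_0)$ in Greibach normal form with $L_G(S_0) = L$; such a grammar exists precisely because $L$ is $\varepsilon$-free. With $G$ in hand, the homomorphism and the category assignment are exactly the ones defined at the start of Section~\ref{section_encoding_grammars}, where each symbol $a \in \Sigma$ is sent to the string of categories $\ell; w_{A_1,\ldots,A_m}; r$ encoding all rules of $G$ that carry $a$ in the first position of their right-hand side.

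The second step is to package this data as a genuine categorial grammar with unique category assignment. The key observation is that the symbols of the target alphabet $\Omega$ should be the distinct categories that occur in the images of the symbols of $\Sigma$: I would let $\Omega$ be this finite set of categories, regarded now as abstract symbols, and let $\triangleright$ assign to each such symbol the very category it represents. By construction each symbol of $\Omega$ receives exactly one category, so the assignment is unique. The set of primitive categories $\PC$ collects the nonterminals $N$ (now read as primitive categories) together with every auxiliary primitive category introduced along the way ($t$, $\ell$, $r$, the $k_i$, the $o_i$, and the $e_i$). The target category is taken to be $\phi(S_0) = S_0 / S_0$, yielding a grammar $G' = (\Omega, \PC, \triangleright, \phi(S_0))$. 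Finally, the homomorphism $h \colon \Sigma \to \Omega^+$ of the theorem is obtained by reading each constructed string of categories as the corresponding string of symbols of $\Omega$.

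The third step reduces the claimed equivalence to the Main Lemma. Because the assignment $\triangleright$ is unique, for any word $h(w) \in \Omega^+$ there is exactly one string of categories $A_1; \ldots; A_k$ with $h(w) = b_1 \ldots b_k$ and $b_i \triangleright A_i$, and this string is precisely $h(w)$ read back as a string of categories. Hence $h(w) \in L(G')$ holds if and only if that string of categories is reducible to $\phi(S_0) = S_0/S_0$. Applying Lemma~\ref{main_lemma} with the nonterminal $X = S_0$ gives that $h(w)$ is reducible to $\phi(S_0)$ if and only if $w \in L_G(S_0) = L$. Chaining these equivalences yields $w \in L$ if and only if $h(w) \in L(G')$, which is the statement of the theorem.

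The bulk of the difficulty has already been discharged inside Lemma~\ref{main_lemma} and the chain of lemmas leading to it, so the remaining work is essentially bookkeeping. The one place demanding care is the passage between strings of categories and words over $\Omega$: one must verify that the uniqueness of $\triangleright$ makes $L(G')$ coincide with the set of words whose underlying category string reduces to $S_0/S_0$, so that the Main Lemma applies verbatim. I would also double-check the $\varepsilon$-free reduction to Greibach normal form, since that is what licenses $L = L_G(S_0)$ with $L \subseteq \Sigma^+$ and keeps every $h(a)$ a non-empty string in $\Omega^+$. Neither point introduces a genuinely new obstacle beyond what the preceding sections have already resolved.
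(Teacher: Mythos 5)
Your proposal is correct and follows essentially the same route as the paper: the paper derives this theorem directly from Lemma~\ref{main_lemma} (applied with $X = S_0$, via a Greibach normal form grammar for the $\varepsilon$-free language $L$), leaving implicit exactly the bookkeeping you spell out --- taking $\Omega$ to be the finite set of categories occurring in the images $h(a)$, assigning each such symbol the category it names, and choosing $\phi(S_0)=S_0/S_0$ as the target. The uniqueness of the assignment and the resulting identification of $L(G')$ with the set of words whose category string reduces to $S_0/S_0$ are handled just as you describe.
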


Among its immediate implications,
there is the following strengthened statement
of Greibach's~\cite{Greibach_hardest} hardest language theorem.

\begin{corollary}
There exists an alphabet $\Omega$
and a language $L_0 \subseteq \Omega^*$
defined by a categorial grammar with unique category assignment,
such that for every context-free language $L \subseteq \Sigma^+$
there exists a homomorphism $h \colon \Sigma \to \Omega^+$,
such that $w$ is in $L$
if and only if $h(w)$ is in $L_0$.
\end{corollary}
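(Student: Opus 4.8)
The plan is to derive the corollary by composing the construction of the preceding theorem with the classical hardest language theorem of Greibach. The theorem just proved produces, for a \emph{given} context-free language, an alphabet, a homomorphism and a categorial grammar with unique category assignment that all depend on that language; the point of the corollary is to make the alphabet $\Omega$ and the language $L_0$ independent of $L$, leaving only the homomorphism to depend on $L$. The key observation is that this is achieved not by applying the theorem to the target language $L$ itself, but by applying it once, and once only, to Greibach's fixed hardest language.

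First I would invoke Greibach's~\cite{Greibach_hardest} theorem to fix a single context-free language $L_0' \subseteq \Sigma_0^+$ over a fixed alphabet $\Sigma_0$, with the property that for every context-free language $L \subseteq \Sigma^+$ there is a homomorphism $g \colon \Sigma \to \Sigma_0^+$ with $w \in L$ if and only if $g(w) \in L_0'$. Next, since $L_0'$ is itself context-free, I would apply the preceding theorem to $L_0'$: this yields a \emph{fixed} alphabet $\Omega$, a \emph{fixed} homomorphism $h_0 \colon \Sigma_0 \to \Omega^+$ and a \emph{fixed} categorial grammar $G$ with unique category assignment over $\Omega$, such that $v \in L_0'$ if and only if $h_0(v) \in L(G)$ for every $v \in \Sigma_0^+$. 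I would then set $\Omega$ to be this alphabet and $L_0 = L(G)$; both are now chosen once and do not depend on any particular $L$, and $L_0$ is by construction defined by a categorial grammar with unique category assignment, so $L_0 \subseteq \Omega^+ \subseteq \Omega^*$ as required.

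To finish, given an arbitrary context-free language $L \subseteq \Sigma^+$, I would take the homomorphism $h = h_0 \circ g \colon \Sigma \to \Omega^+$ obtained by composing the language-dependent $g$ with the fixed $h_0$, and verify the biconditional chain
\begin{equation*}
	w \in L \iff g(w) \in L_0' \iff h_0(g(w)) \in L(G) = L_0 \iff h(w) \in L_0 .
\end{equation*}
The only points requiring care are bookkeeping rather than genuine difficulty: one must confirm that the composition of two homomorphisms is again a homomorphism, that $h$ maps each symbol to a \emph{nonempty} string of $\Omega^+$ --- which holds because both $g$ and $h_0$ are non-erasing, each sending a symbol into $\Sigma_0^+$, respectively $\Omega^+$ --- and that the empty-string conventions are consistent, i.e.\ that the $\lambda$-free form of Greibach's theorem is used, matching the convention of this paper that all languages omit the empty string. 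The main conceptual step, and the only place where one could go astray, is recognising that the theorem must be instantiated on the single hardest language $L_0'$ rather than on the varying target $L$; once that is seen, the argument is a one-line composition.
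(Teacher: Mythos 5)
Your proposal is correct and is exactly the derivation the paper intends: the paper labels the corollary an ``immediate implication'' of the theorem, the implicit argument being precisely to instantiate the theorem once on Greibach's~\cite{Greibach_hardest} fixed hardest language and compose the two non-erasing homomorphisms, as you do. Your attention to the $\lambda$-free conventions and to non-erasure of both homomorphisms covers the only bookkeeping points that could arise.
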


Unique category assignment intuitively means that every symbol is unambiguously interpreted, but does not guarantee that the entire parse is unambiguous.
Can these grammars define any inherently ambiguous languages? They can.
\begin{corollary}
There exists an inherently ambiguous language
defined by a categorial grammar with unique category assignment.
\end{corollary}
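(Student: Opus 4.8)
The plan is to reduce the statement to the classical existence of inherently ambiguous context-free languages, combined with the main theorem above. Fix any known inherently ambiguous context-free language, for instance
\[
L = \{\, a^m b^m c^n d^n \mid m,n \ge 1 \,\} \cup \{\, a^m b^n c^n d^m \mid m,n \ge 1 \,\}
\]
over $\Sigma = \{a,b,c,d\}$, whose inherent ambiguity is a standard result. Applying the theorem just proved to $L$ yields an alphabet $\Omega$, a homomorphism $h \colon \Sigma \to \Omega^+$, and a categorial grammar $G$ with unique category assignment, such that $w$ is in $L$ if and only if $h(w)$ is in $L(G)$. Write $L' = L(G)$; by Theorem~\ref{Th:BarHillel} the language $L'$ is context-free. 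The goal is then to show that this particular $L'$ is inherently ambiguous, which, since it is defined by a categorial grammar with unique category assignment, immediately gives the corollary.

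I would argue by contradiction. The biconditional supplied by the theorem says precisely that $L = h^{-1}(L')$ (the restriction to $\Sigma^+$ changes nothing, as $L \subseteq \Sigma^+$ and $L' \subseteq \Omega^+$). Suppose $L'$ were \emph{not} inherently ambiguous, so that $L'$ admits some unambiguous context-free grammar, equivalently an unambiguous pushdown automaton $M$. The key step is the classical closure of the family of unambiguous context-free languages under inverse homomorphism: from $M$ one builds a pushdown automaton $M'$ for $h^{-1}(L')$ that, upon reading a symbol $a \in \Sigma$, deterministically simulates $M$ on the fixed block $h(a)$ using internal moves. Because $h$ replaces each letter by one and the same string, the accepting computations of $M'$ on $w$ are in bijection with the accepting computations of $M$ on $h(w)$; hence $M'$ is unambiguous whenever $M$ is. Consequently $L = h^{-1}(L')$ would be an unambiguous context-free language, contradicting the choice of $L$. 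Therefore $L'$ is inherently ambiguous.

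The main obstacle is precisely this closure property of unambiguous context-free languages under inverse homomorphism, which is the only nontrivial ingredient. I would either invoke it as a known result (Ginsburg and Ullian) or spell out the bijection between accepting computations sketched above, taking care that each symbol of $\Sigma$ is substituted by the single fixed block $h(a)$, so that the substitution introduces no additional nondeterminism. Everything else---selecting a concrete inherently ambiguous $L$ living in $\Sigma^+$, and rewriting the theorem's biconditional as the identity $L = h^{-1}(L')$---is routine.
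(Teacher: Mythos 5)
Your proof is correct and takes essentially the same route as the paper's: choose a standard inherently ambiguous context-free language, apply the theorem, and conclude by contradiction via the closure of unambiguous context-free languages under inverse homomorphism (which the paper cites from Harrison, just as you would cite Ginsburg and Ullian) that $L(G)$ must itself be inherently ambiguous. The only differences---your choice of example language and the optional sketch of the PDA simulation establishing the closure property---are inessential.
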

\begin{proof}
Let $L$ be any inherently ambiguous language,
such as $L=\set{a^i b^n c^n}{i,n \geqslant 1} \cup \set{a^m b^m c^j}{m,j \geqslant 1}$,
and let $h$ and $G$ be given for $L$ by the theorem.
Then $L(G)$ is inherently ambiguous
~\cite[Thm.~7.4.2, Cor.~1]{ItFLT_Harrison},
for if it were unambiguous,
then so would be its inverse homomorphic image $h^{-1}(L(G))=L$,
which is not the case. 
\end{proof}

\section{Conclusion}

A suggested topic for future research
is to consider the restriction of unique category assignment
in other variants of categorial grammars.
Some of these variants have already been considered:
Safiulin~\cite{Safiulin} showed that
the Lambek calculus with unique category assignment
is as powerful as the Lambek calculus of the general form
(that is, they can define every context-free language).
Recently, Kuznetsov~\cite{Kuznetsov_prodfree} established an analogous result
for product-free Lambek calculus.
But for some natural categorial formalisms,
the same question is yet to be investigated:
for instance, nothing is known
about the expressive power of combinatory categorial grammars
with unique category assignment,
and conjunctive categorial grammars~\cite{Kuznetsov,KuznetsovOkhotin}
(that is, categorial grammars extended with conjunction)
are in the same situation.

\end{document}